\newcommand{\plusindent}{\null~~~~~~~~~~~~~}
\newcommand{\BOLD}[1]{{\bf #1}}
\def\ITEMIZEplusONE{} 
\def\ENUMplusONE{} 
\newtheorem{theorem}{Theorem}[section]
\newtheorem{lemma}[theorem]{Lemma}
\newtheorem{proposition}[theorem]{Proposition}
\newtheorem{claim}[theorem]{Claim}
\theoremstyle{definition}
\newtheorem{definition}[theorem]{Definition}
\newtheorem{observation}[theorem]{Observation}
\theoremstyle{remark}
\newtheorem{remark}[theorem]{Remark}
\newtheorem{notation}[theorem]{Notation}
\newtheorem{question}[theorem]{Question}
\newtheorem{fact}[theorem]{Fact}
\newcommand{\rest}{{\restriction}}
\newcommand{\Dom}{{\rm Dom}}
\newcommand{\seb}{{\rm sb}}
\newcommand{\psb}{{\rm psb}}
\newcommand{\fil}{{\rm fil}}
\newcommand{\almfrt}{\mbox{\rm alm-frt}}
\newcommand{\suc}{{\rm suc}}
\newcommand{\ufbd}{{\rm ufbd}}
\newcommand{\vfbd}{{\rm vfbd}}
\newcommand{\CWT}{{\rm CWT}}
\newcommand{\CTW}{{\rm CWT}}
\newcommand{\CON}{{\rm CON}}
\newcommand{\ZFC}{{\rm ZFC}}
\newcommand{\Fin}{{\rm Fin}}
\newcommand{\RK}{{\rm RK}}
\newcommand{\frt}{{\rm frt}}
\newcommand{\af}{\mbox{\rm alm-frt}}
\newcommand{\rt}{{\rm rt}}
\newcommand{\Dp}{{\rm Dp}}
\newcommand{\ut}{{\rm ut}}
\newcommand{\bd}{{\rm bd}}
\newcommand{\Rang}{{\rm Rang}}
\newcommand{\wilog}{{\rm without loss of generality}}
\newcommand{\Wilog}{{\rm Without loss of generality}}
\newcommand{\then}{{\emph{then}}}
\newcommand{\Then}{{\emph{Then}}}
\newcommand{\Iff}{{\emph{iff}}}
\newcommand{\cA}{{\mathscr A}}
\newcommand{\cB}{{\mathscr B}}
\newcommand{\bbN}{{\mathbb N}}
\newcommand{\gu}{{\mathfrak u}}
\newcommand{\gd}{{\mathfrak d\/}}
\newcommand{\cH}{{\mathscr H}}
\newcommand{\cI}{{\mathscr I}}
\newcommand{\cJ}{{\mathscr J}}
\newcommand{\bbP}{{\mathbb P}}
\newcommand{\bbQ}{{\mathbb Q}}
\newcommand{\cP}{{\mathscr P}}
\newcommand{\cT}{{\mathscr T}}
\newcommand{\cU}{{\mathscr U}}
\newcommand{\cW}{{\mathscr W}}
\newcommand{\bfx}{\mathbf{x}}
\def\mathunderaccent#1#2 {\let\theaccent#1\skewfactor#2
\mathpalette\putaccentunder}
\def\putaccentunder#1#2{\oalign{$#1#2$\crcr\hidewidth
\vbox to.2ex{\hbox{$#1\skew\skewfactor\theaccent{}$}\vss}\hidewidth}}
\def\name{\mathunderaccent\tilde-3 }
\newenvironment{PROOF}[2][\proofname.]
   {\begin{proof}[#1]}
{\end{proof}}
\numberwithin{equation}{section}
\begin{document}
\makeatletter\def\shfiuwefootnote{\gdef\@thefnmark{}\@footnotetext}\makeatother\shfiuwefootnote{Version 2022-10-14\_2. See \url{https://shelah.logic.at/papers/980/} for possible updates.}

\title{Nice $\aleph_1$ generated non-$P$-points, I, 980}

\author{Saharon Shelah}

\address{Einstein Institute of Mathematics\\
Edmond J. Safra Campus, Givat Ram\\
The Hebrew University of Jerusalem\\
Jerusalem, 91904, Israel\\
and \\
Department of Mathematics\\
Hill Center - Busch Campus \\
Rutgers, The State University of New Jersey \\
110 Frelinghuysen Road \\
Piscataway, NJ 08854-8019 USA}

\email{shelah@math.huji.ac.il}

\urladdr{http://shelah.logic.at}


\subjclass{Primary 03E05, 54A25; Secondary:
03E35, 03E17, 54A35}

\keywords{set theory, general topology, ultrafilters, $P$-point, forcing}

\thanks{Partially supported by National Science Foundation, Grant 0600940 and the US-Israel Binational Science Foundation, Grant 2010405 Paper 980 on the Author's list. The author thanks Alice Leonhardt for the beautiful typing of earlier versions (up to 2019) and in later versions the  author would like to thank the typist for his work and is also grateful for the generous funding of typing services donated by a person who wishes to remain anonymous. References like \cite[2.7=La32]{Sh:945} means the label of Th.2.7 is a32.  The reader should note that the version in my website is usually more updated than the one in the mathematical archive.}

\date{September 29, 2022}  

\begin{abstract}
    We define a family of non-principal ultrafilters on $\bbN$ which are, in a sense, very far from $P$-points. We prove the existence of such ultrafilters under reasonable conditions.  In subsequent articles we shall prove that such ultrafilters may exist while no $P$-point exists. Though our primary motivations came from forcing and independence results, the family of ultrafilters introduced here should be interesting from combinatorial point of view too.
    
    We aim in a subsequent paper to use this to show e.g. the consistency of ``$\mathfrak{u} = \aleph_{1}$ and no $P$-point'' (with $\ZFC$). We have wrote done the following easier statement (in 
    E104): 
    the ultra filters we constructed under CH for ``reasonable'' forcing like Sacks forcing preserve the ultra filter. 
\end{abstract}

\maketitle

\newpage

\section{Introduction}

One of the important notions in general topology and set theory of the reals is that of a $P$-point. Recall that {\em a $P$-point\/} is a non-principal ultrafilter $D$ on $\bbN$ with the property that for any countable family $\cA\subseteq D$ there is a $B\in D$ almost (modulo finite) included in all $A\in \cA$ (see Definition~\ref{z9}). Concerning these and other special
ultrafilters on $\bbN$, their history and basic applications we refer the reader to the survey article by Blass \cite{Sh:10}. 

In many applications it is important to preserve $P$-points by specific forcing notions and by a forcing iterated with countable supports. Recall
that {\em preservation of an ultrafilter\/} means that the ultrafilter from the ground model $\mathbf V$ generates an ultrafilter in the generic extension $\mathbf V[G]$ (see \cite[Chapter VI]{Sh:f}). We have a very good understanding of these questions and many relevant results have been presented in the literature. From our point of view the $P$-points are
tractable for independence results because of the following fact:

\begin{fact}[Nice properties of P-points]\label{boxplusone}

    \ENUMplusONE\begin{enumerate}[(A)]
        \item\label{bp1.A}  there are quite many forcing notions preserving $P$-points, 
        
        \item\label{bp1.B}  a proper forcing notion $\bbQ$ which preserves ``$D$ is an ultrafilter'' preserves its being a $P$-point,
        
        \item\label{bp1.C}  the preservation of $P$-points is preserved in limits of CS iterations.
        
        \item\label{bp1.D}  We can destroy a $P$-point by forcing, i.e., ensure it has no extension to a $P$-point (and consequently we may prove the consistency of ``there are no $P$-points''),
        
        \item\label{bp1.E}  moreover, we can ``split hairs'', i.e., destroy some $P$-points while preserving other, so we can have unique $P$-point up to isomorphisms. 
    \end{enumerate}
    
    (Already the properties (\ref{bp1.A},\ref{bp1.B},\ref{bp1.C}) give  a well controlled way to have ultrafilters generated by $\aleph_1 < 2^{\aleph_0}$ sets). 
\end{fact}

For more details we refer the reader to \cite[Ch.VI and
Ch.XVIII,\S4]{Sh:f}. 

We may wonder if the theory developed for $P$-points can be repeated for other ultrafilters. We may ask:

\begin{question}\label{z2}
    Are there other types of ultrafilters preserved by  CS iterations of suitable forcing notions? 
    
    In particular, we are interested in preservation of our ultrafilters at limit stages of CS iterations: for a limit ordinal $\delta$, having been preserved by $\bbP_\alpha$ for $\alpha<\delta$, does this hold for $\bbP_\delta$ when $\langle\bbP_\alpha,\name{\bbQ}_\beta: \alpha\le\delta, \beta<\delta\rangle$ is a CS iteration of proper forcing notions?
\end{question}

We suggested this problem in \cite[3.13]{Sh:666} and we speculated about it there. Note that ultrafilters as in Question~\ref{z2} for CS iterations are
naturally generated by $\aleph_1$ sets; moreover CS iterations are mainly interesting when we start with CH, and ``preserve an ultrafilter'' is meaningful only when we add reals, naturally $\aleph_2$ ones. We suspect
this direction is related to the question on the existence of a point of van Douwen cite{vD} (see Question~\ref{boxplus2a} below), but at present we do not know neither if they are related nor how to answer it. Other specific problems that we have in mind when developing the theory for Question~\ref{z2} are a
problem of Nyikos and a problem of Dow:

\begin{question}\label{boxplus2a}
    [E.~van Douwen] Is it consistent that: there is no ultrafilter $D$ on $\bbQ$ such that every $A \in D$ contains a member of $D$ which is a closed set with no isolated points?
\end{question}

\begin{question}\label{boxplus2b}
    [P.~Nyikos] Is it consistent to have some
    ultrafilter $D \in \beta^*(\bbN)\setminus\mathbb N$ of character $\aleph_1$, but no $P$-point?
\end{question}

\begin{question}\label{boxplus2c}
    [A.~Dow] Is it consistent to have ${\gu} =
    \aleph_1$, there is a $P$-point $D$, but no $P$-point $D$ with $\chi(D) = \aleph_1$?
\end{question}

In the series of papers started here the main points are:

\ENUMplusONE\begin{enumerate}
    \item[(A)] we have an involved family of sets (really well founded trees) appearing in the definition,
    
    \item[(B)] each ultrafilter has no $P$-point as a quotient,
    
    \item[(C)] they are related to a game,
    
    \item[(D)] such systems exists assuming, e.g., $\diamondsuit_{\aleph_1}$,
    
    \item[(E)] enough relevant forcing notions preserve such systems, in particular, some serving \ref{boxplusone}(\ref{bp1.C}), so answering the first question in \ref{z2},
    
    \item[(F)]  we have a preservation theorem for such
    ultrafilters under CS iterations,
    
    \item[(G)] As an application, we will solve Nyikos' problem~\ref{boxplus2b}.   
\end{enumerate}

So problems~\ref{z2} and~\ref{boxplus2b} will be resolved by the methods we start developing here, but presently not~\ref{boxplus2a} and~\ref{boxplus2c} (a problem of van Douwen and a problem of Dow).

In the present article we define ultrafilters analogous to $P$-points but with no $P$-point as a quotient; this is done in Sections 2 and 3. In the fourth section we deal with basic connections to forcing that we will use in the independence results in subsequent papers.

In the second paper of the series (still ``work in progress'')  we present these ultrafilters in a more general framework and deal with sufficient conditions for such an ultrafilter to generate an
ultrafilter in a suitable generic extension. For the limit case we continue the proof of preservation theorems in \cite{Sh:f}, in particular
\cite[Ch.VI,1.26,1.27]{Sh:f} and Case A with transitivity of \cite[Ch.XVIII,\S3]{Sh:f}.  For the successor case we need that the relevant
forcing preserves our ultrafilters.  We will conclude with the proof for $\CON(\gu=\aleph_1+$ no $P$-points). In 
  \cite{Sh:E104}  
we deal with the consistency of the preservation of an ultrafilter by e.g. Sacks forcing CS support of them and more. 

Noting that  the ultrafilters so far were really analogous to selective (i.e., Ramsey) ultrafilters we plan to give a more general framework which also includes $P$-points in a  planned  third part.

\begin{remark}\label{z7}
    There may be $P$-point while $\gd > \aleph_1$, see Blass and Shelah \cite{Sh:242} and references there, but the existence of ultrafilters in the direction here, far from $P$-point, implies ${\gd} = \aleph_1$, see the survey of Blass \cite{Bls10}. But note that the ultrafilter may be $\aleph_1$-generated in a different sense: union of $\aleph_1$ families of the form $\fil(B) \cap \cP(\max(B))$.
    
    Note that it may be harder (than in the $P$-point case) to build such ultrafilters as here which are $\mu$-generated instead of $\aleph_1$-generated because of the unbounded countable depth involved. We have not looked at this as well as at the natural variants of our definition (not to speak of generalization to reasonable ultrafilters, see \cite{Sh:830} and Ros{\l}anowski and Shelah \cite{Sh:889}, \cite{Sh:890}).
\end{remark}

\newpage

\section{System of filters using well founded trees}

\begin{notation}\label{a2}
    Here, $M=(M,<_M)$ is a partial order and $B$ is a subset of $M$ inheriting its order. 
    
    For $\eta\in B$ we let $B_{\ge \eta}=\{\nu \in B:\eta \le_M \nu\}$ and  similarly $B_{> \eta}$. We also define  \[\suc_B(\eta)=\{\nu\in B:\eta <_M \nu\mbox{ but for no $\rho\in B$ do  we have }\eta<_M\rho<_M\nu\}\] and $\max(B)=\{\nu\in B:B \cap M_{>\nu}=\emptyset\}$. 
    
    We say that $Y$ is {\em a front of $B\subseteq M$\/}
    iff:  $Y \subseteq B$ and every branch (maximal chain) of $B$ meets $Y$ and the members of $Y$ are pairwise $<_M$-incomparable. 
\end{notation}

\begin{definition}\label{a5-1}
    Let $M=(M,<_M)$ be a partial order. 
    A set $B \subseteq M$ is a \emph{countable well-founded sub-tree of~$M$}\/  if the following conditions (\ref{51a})--(\ref{51f}) are satisfied.
    
    \ENUMplusONE\begin{enumerate}[(a)]
        \item \label{51a} The set  $B$ is a countable subset of~$M$.
        
        \item \label{51b} The set $B$ has a $<_M$-minimal member called its
        root, $\rt(B)$.
        
        \item \label{51c} The structure $B$ (i.e., $(B,<_M \rest B)$) is a tree with $\le\omega$ levels and no $\omega$-branch (so all chains in $B$ are finite). 
        
        \item \label{51d} For each $\nu \in B$ the set $\suc_B(\nu)$ is either empty or infinite.
        
        \item \label{51e} If $\eta,\nu\in B$ are $<_M$--incomparable, then they have no common $\leq_M$--upper bound (i.e., they are incompatible not only in $B$ but even in $M$).  We abbreviate 
        this as  $\eta\parallel_M\nu$. 
        
        \item \label{51f} If $\nu\in B\setminus \max(B)$ and $F\subseteq M\setminus M_{\leq \nu}$ is finite, then for infinitely many $\varrho\in \suc_B(\nu)$ we have $(\forall \rho\in F)(\rho \parallel_M \varrho)$.
    \end{enumerate}
    
    The family of all countable well-founded  sub-trees of $M$ is denoted by $\CWT(M)$. 
\end{definition}

We will define a natural filter  on the set of maximal nodes of every countable well-founded tree $B$; this filter will naturally induce Rudin-Keisler images on each front of $B$. 

\begin{definition}\label{a5-2}
    For $B \in \CWT(M)$ let $\frt(B)$ be the set of all fronts of~$B$, which in this case means the family of all maximal sets of pairwise incomparable members of~$B$.
    
    For antichains $Y_1,Y_2$ of $M$ we say that $Y_2$ is \emph{above} $Y_1$  iff: \[(\forall \eta \in Y_2)(\exists\nu\in Y_1)[\nu \le_M\eta].\]  This will be used mainly for $Y_1,Y_2 \in \frt(B)$, $B \in \CWT(M)$. 
    
    For $Y_1,Y_2$ as above let the projection $h_{Y_1,Y_2}$ be the unique function $h :Y_2\longrightarrow Y_1$ such that $h(\eta)\le_M \eta$ for $\eta \in Y_2$. 
    
    If $Y_1,Y_2\in\frt(B)$ \then \, $Y_2$ {\em is almost above\/} $Y_1$ iff:
    
    \begin{quote}
        for some $B'\in\seb(B)$, see \ref{a5-3} below, $B' \cap Y_2$ is above $B' \cap Y_1$.
    \end{quote}
    
    We also define the projection $h_{Y_1,Y_2}$ as above, but its domain  is not $Y_2$ but the set $\{\eta \in Y_2:(\exists\nu\in  Y_1)(\nu \le_M \eta)\}$.
    
    The default value of $Y \in \frt(B)$ is max$(B)=
    \{\nu\in B:\nu$ is $<_M$-maximal in~$B\}$.
\end{definition}

We now define two notions of largeness for  subtrees.  \emph{Exhaustive} subtrees correspond to filter sets or ``measure 1'' sets, \emph{positive} subtrees
will correspond to the notion ``positive modulo a filter'' or ``not in the ideal dual to the filter''.

\begin{definition}\label{a5-3}
    Let $B\in\CWT(M)$. We call  $B'$ is an {\em exhaustive subtree\/} of $B$ iff:
    
    \ENUMplusONE\begin{enumerate}[(a)]
        \item\label{a53a}  $B'\in\CWT(M)$, $B'\subseteq B$,
        
        \item\label{a53b}  $\rt(B')=\rt(B)$,
        
        \item\label{a53c}  for all $\nu \in B'$ we have:   $\suc_{B'}(\nu)\subseteq \suc_B(\nu)$ and $\suc_B(\nu)\backslash \suc_{B'}(\nu)$ is finite.
    \end{enumerate}
    
    We let $\seb(B)$ be the set of all exhaustive subtrees $B'$ of~$B$, and we 
    say $f$ \emph{witnesses} ``$B'\in \seb(B)$'' if $f:B' \backslash\max(B)\longrightarrow [B]^{< \aleph_0}$ satisfies \[\nu\in B'\backslash\max(B)\quad\Rightarrow\quad \suc_B(\nu)\backslash \suc_{B'}(\nu)\subseteq f(\nu).\] 
    
    Note that for $f$ being a witness only $f \rest B'$ matters; in fact only  the restriction  $f
    \rest\{\nu\in B'\mid \exists\eta\in Y: \nu\le \eta\}$ matters when we are interested in $D_{B,Y}$.
    
    For $B\in\CWT(M)$ and $Y\in\frt(B)$ let $E_{B,Y}$ be the filter on $Y$ generated by the family  \[\{Y \cap B':B'\mbox{ is an exhaustive subtree of $B$, i.e., }B'\in \seb(B)\}.\] 
    
    
    For $B\in\CWT(M)$ let $\psb_M(B)$ (``p'' stands
    for positive) be the set of \emph{positive subtrees} $B'$ of $B$ which means  (\ref{a53a}),(\ref{a53b}) as above and
    
    \ENUMplusONE\begin{enumerate}[(c)']
        \item \label{a53cprime}  if $\nu\in B'\backslash\max(B)$, then $\suc_{B'}(\nu)$ is an infinite subset of $\suc_B(\nu)$. 
    \end{enumerate}
\end{definition}

\begin{definition}\label{a5-4}
    An antichain $Y\subseteq M$ is an {\em almost front of $B$\/} if for some $B' \in \seb(B)$ the intersection $Y \cap B'$ is a front of~$B'$. Let $\af(B)=\af_M(B)$ denote the set of all almost fronts of~$B$. 
    
    For $Y \in \almfrt_M(B)$ let \[\fil_M(Y,B) = \{X \subseteq Y: \mbox{ for some $B'\in \seb(B)$ we have }X \supseteq B' \cap Y\}.\] 
\end{definition}

\begin{definition}\label{a5-56}
    Let $\le^*_M$ be the following two-place relation (actually a partial order) on $CWT(M)$: \\
    
    \relax $B_1 \le^*_M B_2$ \Iff \  
    \begin{minipage}[t]{88mm}
    \relax $(B_1,B_2 \in \CWT(M)$, \\  $\rt(B_1) = \rt(B_2)$, \\ 
    and for some $B'_2 \in \seb(B_2)$, we have 
    
    \begin{itemize}
        \item[--]  $B'_2 \cap B_1 \in \psb_M(B_1)$, and
        
        \item[--]  every almost front of $B'_2 \cap B_1$ is an almost front of~$B_2$. 
    \end{itemize}
    \end{minipage}
    \\
    
    The tree $B'_2$ as above will be called {\em a witness for $B_1\leq^*_M B_2$}. 
    
    For $B\in\CWT(M)$, {\em the depth of $B$\/} is defined recursively by \[\Dp(B)=\sup\{\Dp(B_{\ge\eta}) + 1:\eta\in B \backslash\{\rt(B)\}\}.\]   
    
\end{definition}

\begin{remark}\label{a7}
    If $B,B'\in\CWT(M)$, $B'\subseteq B$ and $\nu\in B'$, then $\suc_B(\nu) \cap
    B'\subseteq\suc_{B'}(\nu)$, but the two sets do not have to be equal. Note
    that in the definitions of both $B'\in\seb(B)$ and $B'\in \psb_M(B)$ we do
    require that 
    \[
     \big(\forall\nu\in B'\big)\big(\suc_B(\nu)\cap B'=
    \suc_{B'}(\nu)\big) \]
    This condition implies that if $Y\subseteq B$ is a front of $B$, then $Y\cap B'$ is a front of~$B'$.
\end{remark}

\begin{observation}\label{a11}
    Let $M$ be a partial order and $B,B_1,B_2\in\CWT(M)$.
    
    \ENUMplusONE\begin{enumerate} 
        \item\label{28.1} We have that $B_1 \le^*_M B_2$ if and only if every almost front of $B_1$ is an almost front of~$B_2$. 
        
        \item\label{28.2} The relation $\leq^*_M$ is a partial order on $\CWT(M)$.
        
        \item\label{28.3} If $B_2\in\psb_M(B_1)$, \then\  $B_1\leq^*_M B_2$ and $\psb_M
        (B_2)\subseteq \psb_M(B_1)$.
        
        \item\label{28.4} If $B_2\in \seb(B_1)$, \then\  $B_2\in\psb(B_1)$, $\seb(B_2)\subseteq \seb(B_1)$ and $B_1\leq^*_M B_2\leq ^*_M B_1$. 
          
        \item\label{28.5} For $B \in \CWT(M)$, {\rm max}$(B)$ is a front of $B$ and also $\{\rt(B)\}$ is. If $B\neq\{\rt(B)\}$, \then\, $\suc_B(\rt(B))$ is a front
        of~$B$.  
        
        \item\label{28.6} Every front of $B \in \CWT(M)$ is an almost front of~$B$.
        
        \item\label{28.7} If $B \in \CWT(M)$ \then\, $\Dp(B)$ is a countable ordinal and $B_{\geq \eta}\in \CWT(M)$ for all $\eta\in B$.
        
        \item\label{28.8} If $Y\subseteq B\setminus\{\rt(B)\}$ is a front of $B$, and $\eta\in\suc_B(\rt(B))$, \then\, $Y\cap B_{\geq\eta}$ is a front of~$B_{\geq\eta}$. 
        
        \item\label{28.9} If $Y$ is an almost front of $B$ and an antichain $Z$ is an almost front of $B_{\geq\eta}$ for every $\eta\in Y\cap B$, \then\, $Z$ is an almost front of~$B$. 
        
        \item\label{28.10} If $B_1\leq^*_M B_2$ and $Y$ is a front of $B_1$, \then\, there is $B_2'\in\seb(B_2)$ such that $Y\cap B_2'$ is a front of $B_2'$ and $(B_1)_{\geq\eta}\leq^*_M (B_2')_{\geq\eta}$ for all $\eta\in Y\cap B_2'$.
    \end{enumerate}
\end{observation}

\begin{PROOF}{\ref{a11}}
    Straightforward.
\end{PROOF}

\begin{definition}\label{7g.1}
    Let $\mathbf K$ be the class of the objects $\mathbf x=\langle M_{\mathbf x},
    <_{M_{\mathbf x}}, \bar{\cA}_{\mathbf x},\cA_{\mathbf x}, \cB_{\mathbf x},
    \leq_{\mathbf x} \rangle$ satisfying the following properties (\ref{71.a})--(\ref{71.h}). 
    \ENUMplusONE\begin{enumerate}[(a)]
    \item\label{71.a} The structure  $ (M_{\mathbf x},<_{M_{\mathbf x}})=(M ,<)$ is a partial order with the smallest element $\rt_{\mathbf x} = \rt(\mathbf x)$. Let $M^-_{\mathbf x}=M_{\mathbf x}\backslash \{\rt_{\mathbf x}\}$, 
    \item\label{71.b}  $\bar{\cA}_{\mathbf x}=\bar{\cA}= \langle{\cA}_\eta:\eta
    \in M \rangle = \langle {\cA}^{\mathbf x}_\eta:\eta \in M_{\mathbf x}\rangle$
    and $\cA_{\mathbf x}=\bigcup\{\cA_\eta:\eta\in M^-_{\mathbf x}\}$,
    \item\label{71.c}   ${\cA}_\eta \subseteq \CWT(M)$, let ${\cA}^-_\eta =
    {\cA}_\eta \backslash \{\{\eta\}\}$,
    \item\label{71.d}   $\rt(B)=\eta$ for every $B\in\cA_\eta$,
    \item\label{71.e}   ${\cA}_\eta$ is not empty, in fact $\{\eta\}\in {\cA}_\eta$,
    \item\label{71.f}   $\cB_{\mathbf x}=\cA^{\mathbf x}_{\rt_{\mathbf x}}\setminus\big\{
    \{\rt_{\mathbf x}\}\big\}$ and $\le_{\mathbf x}$ is a directed partial order
    on ${\cB}_{\mathbf x}$,
    \item\label{71.g}   $B_1 \le_{\mathbf x} B_2$ implies $B_1\le^*_M B_2$, see
    Definition~\ref{a5-56} and, of course, $B_1,B_2 \in \cB_{\mathbf x}$,
    \item\label{71.h}   if $\nu\in B\in {\cA}_\eta$ then $B\cap M_{\ge\nu}\in
    {\cA}_\nu$.
    \end{enumerate}
    When dealing with $M_{\mathbf x}, \bar{\cA}_{\mathbf x}$ etc we may omit $\mathbf x$ when clear from the context.
\end{definition}

\begin{definition}\label{7g.4}
    Let $\mathbf x \in \mathbf K$ and $\eta \in M_{\mathbf x}$.
    \ENUMplusONE\begin{enumerate}
    \item Let $\frt(\eta) = \frt_{\mathbf x}(\eta) = \{Y:Y$ is a front of $B$ for some $B \in \cA^{\mathbf x}_\eta\}$ and instead of $\frt(B)$ (see Definition~\ref{a5-2}) we may write also $\frt_{\mathbf x}(B)$.  We let 
    \[\frt^-(\eta) = \{Y \in \frt(\eta):Y \ne \{\eta\}\}.\] Omitting $\eta$ means  $\eta = \rt_{\mathbf x}$. 
    
    \item Similarly, using Definition~\ref{a5-4}, we define $\af_{\mathbf x}(\eta)$
    (and $\af_{\mathbf x}$).
    
    \item Let $B\in \cA^{\mathbf x}_\eta$. We define
    \[\begin{array}{ll}
    \Fin(B)=\big\{f: &f\mbox{ is a function with domain $B \backslash
    \max(B)$ such that}\\
    &f(\nu) \in [\suc_B(\nu)]^{< \aleph_0}\mbox{ for all }\nu\in B\setminus
    \max(B)\big\},
    \end{array}\]
    and for $f\in \Fin(B)$ we set 
    \[A_f = A_{B,f} = \big\{\eta \in B: \big(\forall\rho\in B\setminus\max(B)\big)
    \big(\forall\varrho\in \suc_B(\rho)\big)
    \big(\varrho\leq_M\eta\ \Rightarrow\ \varrho \notin f(\rho)\big)
    \big\}.\]
    (Recall Definition~\ref{a5-3}.)
    \item Assume that $Y\in\af_{\mathbf x}$. We let $D_Y = D^{\mathbf x}_Y$ be the
      family 
    \begin{align*}
    \big\{\,Z \subseteq Y:\  &\mbox{for some }B\in\cB_{\mathbf x}\mbox { and } B'\in
    \seb(B)\\ & \mbox{ we have } Y\in\af(B)\mbox{ and }B'\cap Y\subseteq Z\,\big\}.
    \end{align*}
    \item If $B \in \cB_{\mathbf x}$, then $D_{\mathbf x}(B)=D^{\mathbf x}_{\max(B)}$.
    \item We let $\Dp_{\mathbf x}(\eta) = \sup\{\Dp(B)+1: B \in \cA^{\mathbf
        x}_\eta\}$ (recall Definition~\ref{a5-56}). 
    \end{enumerate}
    If ${\mathbf x}$ is clear from the context, then we may omit the
    subscript/superscript ${\mathbf x}$ in the objects defined above. 
\end{definition}

Let us recall the definition of the Rudin--Keisler order on ultrafilters.

\begin{definition}\label{z9rk}
    Let $D_\ell$ be an ultrafilter on $\cU_\ell$ for $\ell = 1,2$. We say $D_1
    \le_{\RK} D_2$ \Iff \, there is a function $h$ whose domain and range 
    are subsets of $\cU_2$, $\cU_1$,  respectively, such that
    \[ \forall A \subseteq \cU_1 : \quad   A \in D_1\   \Leftrightarrow\ 
    \{a\in \Dom(h):h(a) \in A\} \in D_2 \]
\end{definition}

\begin{observation}\label{7g.5g}
    Assume $\mathbf x \in \mathbf K$ and  let $B,B_1,B_2 \in\cB_{\mathbf x}$. 
    \ENUMplusONE\begin{enumerate} 
    \item\label{75.1} The singleton $\{\rt_{\mathbf x}\}$ is in $ \frt_{\mathbf x}$ and $D^{\mathbf x}_{\{\rt_{\mathbf
          x}\}} = \big\{\{\rt_{\mathbf x}\}\big\}$.  
    \item\label{75.2} If $B_1\leq_{\mathbf x} B_2$, $f\in\Fin(B_1)$ and $Y\in\af(B_1)$,
    \then\, $Y\in\af(B_2)$ and there is $g\in\Fin(B_2)$ such that $Y\cap A_{B_2,
    g}\subseteq Y\cap A_{B_1,f}$.
    \item\label{75.3} If $Y\in\af(B_\ell)$, $f_\ell\in \Fin(B_\ell)$ (for $\ell=1,2$),
      \then\, there are $B^*\in\cB_{\mathbf x}$ and $g\in\Fin(B^*)$ such that $B_1\leq_{\mathbf x} B^*$, $B_2\leq_{\mathbf x} B^*$ and
    \[Y\cap A_{B^*,g}\subseteq Y\cap A_{B_1,f_1}\cap A_{B_2,f_2}.\]
    \item\label{75.4} If $Y\in\af_{\mathbf x}$, \then\, $D_Y^{\mathbf x}$ is a filter on~$Y$.
    \item\label{75.5} If $B_1 \le_{\mathbf x} B_2$, $Y_1\in\af(B_1)$, and $Y_2=
    Y_1\cap B_2$ (hence $Y_2\in\af(B_2)$), \then\, $Y_2\in D^{\mathbf x}_{Y_1}$
    and $D^{\mathbf x}_{Y_2} = D^{\mathbf x}_{Y_1}\rest Y_2$.
    \item\label{75.6} Assume that $Y_1,Y_2 \in\frt(B)$ and $Y_2$ is above~$Y_1$. Let $h:Y_2
    \stackrel{\rm onto}{\longrightarrow} Y_1$ be the projection, i.e.,
    \[h(\nu_2)=\nu_1\quad \Leftrightarrow\quad \nu_1\in Y_1\ \wedge\
    \nu_2 \in Y_2\ \wedge\ \nu_1 \le_{M_{\mathbf x}} \nu_2.\]
    \Then\, $h(D_{Y_2})=D_{Y_1}$, i.e., $D_{Y_1} = \big\{A\subseteq Y_1:
    h^{-1}[A]\in D_{Y_2} \big\}$ (so $h$ witnesses $D_{Y_1}\le_{\RK}
    D_{Y_2}$). 
    \item\label{75.7}  If $B_1 \le_{\mathbf x} B_2$ and $Y_\ell=\suc_{B_\ell}(\rt_{\mathbf x})$
      for $\ell=1,2$, \then \,: 
    \ENUMplusONE\begin{enumerate}
    \item[(a)] $Y_\ell$ is a front of $B_\ell$ and $Y_1$ almost above $Y_2$, see Definition~\ref{a5-2},
    \item[(b)] if $Y$ is a front of $B_\ell$ and it is not $\{\rt_{\mathbf x}\}$,
    then $Y$ is above $Y_\ell$.
    \end{enumerate}
    \item\label{75.8} The set  $\max(B)$ is the maximal front of $B$ which means that it
    is above any other.
    \item\label{75.9} If $\bbQ$ is an ${}^\omega\omega$-bounding forcing and $B
    \in\cB_{\mathbf x}$, \then \, for any $B'\in \seb(B)^{\mathbf V[\bbQ]}$ there is $B'' \in (\seb(B))^{\mathbf V}$ such that $B'' \subseteq B'$.
    \item\label{75.10} If $F$ is a finite subset of $M_{\mathbf x}^-$, $B\in \cB_{\mathbf x}$,
    \then\, there is a branch (i.e., a maximal chain) $C\subseteq B$ such that
    \[(\forall\rho\in F)(\forall \sigma\in C)(\rho\nleq_M \sigma).\]
    \item\label{75.11}  If $B\in\cA_\eta$ and $\nu\in B\setminus\max(B)$, \then\, ${\rm
    id}_{\mathbf x}(\nu,B)$ is a proper ideal ideal on $\suc_B(\nu)$.
    \end{enumerate}
\end{observation}

\begin{PROOF}{\ref{7g.5g}}
    Straightforward.
\end{PROOF} 

\begin{definition}\label{7g.7} 
    \ENUMplusONE\begin{enumerate}
    \item\label{77.1} For an (infinite) cardinal $\kappa$ let $\mathbf K_{< \kappa}$ be
    the class of $\mathbf x \in \mathbf K$ such that $\|\mathbf x\| := |M_{\mathbf
    x}| + \sum\{|{\cA}^{\mathbf x}_\eta|:\eta \in M_{\mathbf x}\} < \kappa$,
    similarly $\mathbf K_{\le \kappa}$.
    \item\label{77.2} The relation $\le_{\mathbf K}$ is the following two-place relation on $\mathbf K$ (it is a partial order, see Observation~\ref{7g.9} below): $\mathbf x \le_{\mathbf K}
    \mathbf y$ if and only if
    \ENUMplusONE\begin{enumerate}[(a)]
    \item\label{7g7.a} $M_{\mathbf x} \subseteq M_{\mathbf y}$ (as partial orders) and,
    moreover, for any $\eta,\nu\in M_{\mathbf x}$ we have
    \[\nu\parallel_{M_{\mathbf x}} \eta\quad\mbox{ if and only if }\quad
    \nu\parallel_{M_{\mathbf y}}\eta,\]
    and
    
    \item\label{7g7.b} $\eta \in M_{\mathbf x}\quad \Rightarrow\quad
    {\cA}^{\mathbf x}_\eta\subseteq {\cA}^{\mathbf y}_\eta$, and
    
    \item\label{7g7.c} $\rt_{\mathbf y}=\rt_{\mathbf x}$ (actually follows from (\ref{7g7.d})), and
    
    \item\label{7g7.d} $\le_{\mathbf x}=\le_{\mathbf y} \rest \cB_{\mathbf x}$.
    \end{enumerate}
    
    \item\label{77.3} If $\langle \mathbf x_\alpha:\alpha < \delta\rangle$ is a $\le_{\mathbf K}$-increasing sequence we define $\mathbf x_\delta =
    \bigcup\{\mathbf x_\alpha:\alpha < \delta\}$, the union of the sequence,
    by $M_{\mathbf x_\delta}=\bigcup\{M_{\mathbf x_\alpha}:
    \alpha < \delta\}$ as partial orders and ${\cA}^{\mathbf x_\delta}_\eta=\bigcup\{{\cA}^{\mathbf x_\alpha}_\eta:
    \alpha<\delta$ satisfies $\eta \in M_{\mathbf x_\alpha}\}$ and $\le_{\mathbf x_\delta}=\bigcup\{\le_{\mathbf x_\alpha}:\alpha<\delta\}$.
    \end{enumerate}
\end{definition}

\begin{observation}\label{7g.9}
    \ENUMplusONE\begin{enumerate}
        \item\label{79.1}
        It is easy to see that the relation $\le_{\mathbf K}$ is really a partial order.  
        \item\label{79.2}
        Moreover, this order is
        closed under chains, i.e.: \\
        Whenever $\langle \mathbf x_\alpha:\alpha < \delta\rangle$ is $\le_{\mathbf K}$--increasing, we can define $\mathbf x_\delta$ as 
        the union of the sequence.  It is then clear that $\mathbf x_\delta  $  is 
        a $\le_{\mathbf K}$--lub of the sequence and $\|\mathbf x_\delta\|
        \le \sum\|\{\|\mathbf x_\alpha\|:\alpha < \delta\}$.
    \end{enumerate}
\end{observation}

\begin{definition}\label{7g.14}
    Let $\mathbf x \in \mathbf K$. We say that $\mathbf x$ is:
    \ENUMplusONE\begin{enumerate}
    \item[{\bf fat}] iff: if $B\in\cB_{\mathbf x}$ and $B'\in
    \seb(B)$, then there is $B''\in\seb(B')$ such that $B''\in
    \cB_{\mathbf x}$ and $B \le_{\mathbf x} B''$;
    \item[{\bf big}] iff: if $B\in\cB_{\mathbf x}$ and $\mathbf c:\max(B)
    \longrightarrow \{0,1\}$, then for some $B'\in\cB_{\mathbf x}$ we have
    that
    \[B'\in\psb_{M_{\mathbf x}}(B)\cap\cB_{\mathbf x},\quad B \le_{\mathbf x} B', \quad
    \mbox{ and }\quad {\mathbf c}\rest \max(B')\mbox{ is constant},\]
    \item[{\bf large}] iff: whenever $B\in\cB_{\mathbf x}$ and $\mathbf c$ is a
       function with domain $\max(B)$, then for some $B'\in\psb_{M_{\mathbf
           x}}(B)\cap\cB_{\mathbf x}$ and a front $Y$ of $B'$ we have $B\leq_{\mathbf x} B'$ and 
    \[\big(\forall\eta,\nu\in\max(B')\big)\big({\mathbf c}(\eta) = {\mathbf c}
    (\nu)\ \Leftrightarrow\ (\exists \rho \in Y)(\rho \le_{M_{\mathbf x}}
    \eta \wedge \rho \le_{M_{\mathbf x}} \nu)\big),\]
    \item[{\bf full}] iff: whenever $B\in\cA^{\mathbf x}_\eta$, $\eta
    \ne\rt_{\mathbf x}$ and $B'\in \psb_{M_{\mathbf x}}(B)$, then $B'\in
    \cA^{\mathbf x}_\eta$.
    \end{enumerate}
\end{definition}

\newpage

\section{Construction of ultra-systems}

\begin{lemma}\label{8h.3}
    The set $\mathbf K_{\le \aleph_0}$ is non-empty.
\end{lemma}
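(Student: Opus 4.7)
The plan is to exhibit the minimal possible witness to $\bold K_{\le\aleph_0}\ne\emptyset$, a depth-one tree. Let $M=\{*\}\cup\bbN$ be ordered by declaring $* <_M n$ for each $n\in\bbN$ and no further strict relations; this is a countable partial order whose smallest element $*$ I set as $\rt_{\bold x}$. I then define
\[\cA^{\bold x}_{*}=\bigl\{\{*\},M\bigr\},\qquad \cA^{\bold x}_n=\bigl\{\{n\}\bigr\}\ \text{for }n\in\bbN,\]
take $\cB_{\bold x}=\{M\}$, and let $\le_{\bold x}$ be equality on $\cB_{\bold x}$.

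First I would verify that $M\in\CWT(M)$ in the sense of \ref{a5}(1). It is countable, has root $*$, has two levels hence no $\omega$-branch, and $\suc_M(*)=\bbN$ is infinite; any two distinct $n,m\in\bbN$ are $<_M$-maximal, so they have no common $\le_M$-upper bound, giving clause (e). For clause (f), the only non-maximal node is $\nu=*$, and for a finite $F\subseteq M\setminus M_{\le *}=\bbN$ any $n\in\bbN\setminus F$ is $<_M$-incomparable with every $m\in F$, so $n\parallel_M m$; there are infinitely many such $n$. Singletons $\{\eta\}$ trivially lie in $\CWT(M)$.

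Next I would run through the seven clauses of Definition \ref{7g.1}. Clauses (a)--(e) are immediate by construction. Clause (f) is trivial: $\cB_{\bold x}$ is a one-point set, so equality is a directed partial order. Clause (g) follows because $\le^*_M$ is reflexive by \ref{a11}(2). Clause (h) is handled by cases: if $B=\{\eta\}\in\cA^{\bold x}_\eta$ then $\nu=\eta$ and $B\cap M_{\ge\nu}=\{\eta\}\in\cA^{\bold x}_\eta$; if $B=M\in\cA^{\bold x}_{*}$ then $M\cap M_{\ge *}=M\in\cA^{\bold x}_*$, while for $\nu=n\in\bbN$ we have $M\cap M_{\ge n}=\{n\}\in\cA^{\bold x}_n$. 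Finally $\|\bold x\|=|M|+\sum_\eta|\cA^{\bold x}_\eta|=\aleph_0$, so $\bold x\in\bold K_{\le\aleph_0}$.

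There is no genuine obstacle here: the lemma is a base case whose role is only to certify that $\bold K$ is non-empty before the more delicate constructions of \S2 enrich $\bold x$ toward members of $\bold K_{\uf}$ enjoying fat/big/large/full properties. If one wanted a richer starting point for later induction (say $M={}^{\omega>}\omega$ under the initial segment order, with $\cA^{\bold x}_\eta$ the countable closure of $\{\{\eta\}\}\cup\{\{\eta\}\cup\{\eta^\frown\langle n\rangle:n\in\bbN\}\}$ under the restriction operation $B\mapsto B\cap M_{\ge\nu}$), the same verifications go through, again with no real difficulty.
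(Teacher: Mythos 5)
Your proposal is correct and follows essentially the same approach as the paper: exhibit an explicit small witness and check the clauses of Definitions \ref{a5}(1) and \ref{7g.1} directly. The paper's own witness is even more degenerate --- it takes $M_{\bold x}=\{\eta_*\}$ a single point with $\cA^{\bold x}_{\eta_*}=\{\{\eta_*\}\}$, so that $\cB_{\bold x}=\emptyset$ and everything holds vacuously --- but your depth-one tree works just as well and all your verifications are sound.
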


\begin{PROOF}{\ref{8h.3}}
    Define ${\mathbf x}$ so that $M_{\mathbf x}=\{\eta_*\}$, ${\cA}^{\mathbf x}_{\eta_*}=\{\{\eta_*\}\}$, ${\rm rt}_{\mathbf x}=\eta_*$.
    Now it is easy to check.
\end{PROOF}

\begin{lemma}\label{8h.7}
    If $\mathbf x \in \mathbf K$ and $\eta \in M_{\mathbf x}$ satisfies $|{\cA}^{\mathbf x}_\eta| = 1$, i.e., ${\cA}^{\mathbf x}_\eta=\big\{\{\eta\}\big\}$, \then \, for some $\mathbf y \in\mathbf K$ we have $\mathbf x \le_{\mathbf K} \mathbf y$, $|{\cA}^{\mathbf y}_\eta|>1$ and $\|\mathbf y\|\le\|\mathbf x\|+\aleph_0$.
\end{lemma}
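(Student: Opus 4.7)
The plan is to adjoin to $M_{\bold x}$ a countable antichain of fresh $<_M$-maximal leaves $\{\nu_n : n < \omega\}$ sitting immediately above $\eta$, and to enlarge $\cA^{\bold x}_\eta$ by adding the tree $B^\ast = \{\eta\} \cup \{\nu_n : n < \omega\}$, which gives a second member of $\cA^{\bold y}_\eta$. Concretely, put $M_{\bold y} = M_{\bold x} \cup \{\nu_n : n < \omega\}$ (the $\nu_n$ fresh) and extend the order by $\rho <_{M_{\bold y}} \nu_n$ iff $\rho \le_{M_{\bold x}} \eta$, with the $\nu_n$'s pairwise incomparable and $<_{M_{\bold y}}$-maximal. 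The key observation is preservation of $\parallel$ on $M_{\bold x}$: any putative new common upper bound of $\rho_1, \rho_2 \in M_{\bold x}$ would have to be some $\nu_n$, forcing $\rho_1, \rho_2 \le_{M_{\bold x}} \eta$, whence $\eta$ itself was already a common upper bound in $M_{\bold x}$. Set $\cA^{\bold y}_\eta = \cA^{\bold x}_\eta \cup \{B^\ast\}$, $\cA^{\bold y}_{\nu_n} = \{\{\nu_n\}\}$, and $\cA^{\bold y}_\rho = \cA^{\bold x}_\rho$ otherwise. For the top-level data: if $\eta \ne \rt_{\bold x}$ keep $\cB_{\bold y} = \cB_{\bold x}$ and $\le_{\bold y} = \le_{\bold x}$; if $\eta = \rt_{\bold x}$, then by hypothesis $\cB_{\bold x} = \emptyset$, so put $\cB_{\bold y} = \{B^\ast\}$ with $B^\ast \le_{\bold y} B^\ast$.

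The bulk of the verification is checking $\CWT(M_{\bold y})$-membership. For $B^\ast$ only clause \ref{a5}(1)(f) needs care: for finite $F \subseteq M_{\bold y} \setminus (M_{\bold y})_{\le \eta}$, each $\rho \in F \cap M_{\bold x}$ has $\rho \not\le \eta$ and hence is $\parallel_{M_{\bold y}} \nu_n$ for every $n$ (a common upper bound $\sigma \ge \nu_n$ would force $\sigma = \nu_n$, and then $\sigma \ge \rho$ fails), while any $\nu_k \in F$ is $\parallel_{M_{\bold y}}$ with $\nu_n$ for $n \ne k$, so cofinitely many $\nu_n$ serve. For an old $B' \in \cA^{\bold x}_\rho$, a non-leaf $\nu \in B'$, and a finite $F \subseteq M_{\bold y} \setminus (M_{\bold y})_{\le \nu}$, split $F = F_0 \cup F_1$ with $F_0 \subseteq M_{\bold x}$ and $F_1 \subseteq \{\nu_n : n < \omega\}$. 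Clause (h) of \ref{7g.1} together with $\cA^{\bold x}_\eta = \{\{\eta\}\}$ forces $\eta$ to be a leaf of $B'$ whenever $\eta \in B'$, so $\nu \ne \eta$; hence either $\eta <_{M_{\bold x}} \nu$, in which case $\suc_{B'}(\nu) \subseteq (M_{\bold x})_{> \eta}$ and applying \ref{a5}(1)(f) in $M_{\bold x}$ to $F_0$ alone gives the desired $\varrho$'s, or $\eta \not\le \nu$ and one applies \ref{a5}(1)(f) in $M_{\bold x}$ to $F_0 \cup \{\eta\}$ to obtain infinitely many $\varrho \in \suc_{B'}(\nu)$ with $\varrho \parallel_{M_{\bold x}} \eta$ (hence $\varrho \not\le \eta$, hence $\varrho \parallel_{M_{\bold y}} \nu_n$ for every $\nu_n \in F_1$) and $\varrho \parallel_{M_{\bold x}} \rho$ for $\rho \in F_0$; preservation of $\parallel$ then handles the $F_0$-part.

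Once $\CWT(M_{\bold y})$-membership is secured, the axioms \ref{7g.1}(a)--(h) for $\bold y$ are immediate: (e) and (h) at the new points reduce to $\{\nu_n\} \in \cA^{\bold y}_{\nu_n}$, (f) holds by the case split on whether $\eta = \rt_{\bold x}$, (g) is either trivial from $B^\ast \le_{\bold y} B^\ast$ or inherited from $\bold x$, and $\bold x \le_{\bold K} \bold y$ (see \ref{7g.7}(2)) is transparent, with its clause (a) being exactly the preservation of $\parallel$. The cardinality bound $\|\bold y\| \le \|\bold x\| + \aleph_0$ holds since we add $\aleph_0$ new points together with one new tree at $\eta$ and singleton trees at the $\nu_n$'s. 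The main obstacle is the case-analytic verification that every old $B'$ continues to satisfy \ref{a5}(1)(f) in $M_{\bold y}$ when $F$ contains some of the new $\nu_n$'s; this is where the hypothesis $\cA^{\bold x}_\eta = \{\{\eta\}\}$ does real work, via the enforced leafness of $\eta$ in every old tree.
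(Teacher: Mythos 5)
Your construction is exactly the one in the paper's proof of \ref{8h.7}: the same fresh countable antichain of maximal points above $\eta$, the same extension of the order, the same new tree $\{\eta\}\cup\{\nu_n:n<\omega\}$ added to $\cA_\eta$, singleton trees at the new points, and the same case split on whether $\eta=\rt_{\bold x}$ for $\le_{\bold y}$. The paper leaves the verification as ``Now check''; your detailed check of compatibility-preservation and of clause \ref{a5}(1)(f) for the old trees (using that $\eta$ must be a leaf of every old tree by \ref{7g.1}(h)) is correct and fills in exactly what that remark elides.
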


\begin{PROOF}{\ref{8h.7}}
    Let $\langle \eta_n:n < \omega\rangle$ be pairwise distinct objects
    not belonging to $M_{\mathbf x}$. We define $\mathbf y$ by:
    \ENUMplusONE\begin{enumerate}
    \item[(a)] $M_{\mathbf y}$ has set of elements $M_{\mathbf x} \cup
    \{\eta_n:n < \omega\}$,
    \item[(b)] $\nu<_{M_{\mathbf y}}\rho$ if and only if $\nu
    <_{M_{\mathbf x}}\rho$ or $\nu \le_{M_{\mathbf x}}\eta\ \wedge\
    (\exists n)(\rho = \eta_n)$,
    \item[(c)] ${\cA}^{\mathbf y}_\nu$ is defined by a case distinction:
    \begin{itemize}
    \item[--] If $\nu\in M_{\mathbf x} \backslash \{\eta\}$, then ${\cA}^{\mathbf y}_\nu:=\cA^{\mathbf x}_\nu$.
    \item[--] If $\nu = \eta$, then $ {\cA}^{\mathbf y}_\nu:=\{\{\eta\},\{\eta_n:n<\omega\}\cup \{\eta\}\}$.
    \item[--] If $\nu = \eta_n$, then ${\cA}^{\mathbf y}_\nu:=\{\{\eta_n\}\}$.
    \end{itemize}
    \item[(d)] the order $\leq_{\mathbf y}$ is $\leq_{\mathbf x}$ if $\eta\ne\rt_{\mathbf x}$, and it is determined by:\\ $\{\eta\}\leq_{\mathbf y}
      \{\eta_n:n<\omega\}\cup\{\eta\}$ if $\eta = \rt_{\mathbf x}$.
    \end{enumerate}
    Now check.
\end{PROOF}

\begin{lemma}\label{8h.10}  
    \ENUMplusONE\begin{enumerate}
    \item\label{810.1} If $\mathbf x \in \mathbf K_{\le \aleph_0}$ \then \, for some $\mathbf y \in
      \mathbf K_{\le \aleph_0}$ we have $\mathbf x \le_{\mathbf K} \mathbf y$ and in $\cB_{\mathbf y}$ there is a $\le_{\mathbf y}$--maximal member.
    \item\label{810.2} If $\mathbf x\in\mathbf K_{\le \aleph_0}$ and some $B\in \cB_{\mathbf x}$ is $\le_{\mathbf x}$--maximal \then \, for some $\mathbf y\in \mathbf K_{\le
        \aleph_0}$ and $B' \in\cB_{\mathbf y}$ we have $\mathbf x\le_{\mathbf K}\mathbf
      y$ and  $B <_{\mathbf y} B'$.
    \item\label{810.3}  If $\mathbf x\in \mathbf K_{\leq\aleph_0}$, $\eta\in M_{\mathbf x}$, $B_1\in\cA^{\mathbf x}_\eta$, $B_2 \in \psb_{M_{\mathbf x}}(B_1)$ and
    \[\eta=\rt_{\mathbf x}\ \Rightarrow\ B_1 \mbox{ is $\leq_{\mathbf
        x}$--maximal},\] 
    \then \, there is $\mathbf y \in \mathbf K_{\le \aleph_0}$ such that ${\mathbf
      x}\le_{\mathbf K} {\mathbf y}$ and $B_2 \in \cA^{\mathbf y}_\eta$.
    \item\label{810.4} If $\mathbf x\in \mathbf K_{\leq\aleph_0}$, $B_1\in\cB_{\mathbf x}$ and $B_2\in\seb(B_1)$, \then \, there is $\mathbf y\in \mathbf K_{\le \aleph_0}$
      such that ${\mathbf x}\le_{\mathbf K} {\mathbf y}$ and $B_2 \in \cB_{\mathbf y}$.
    \end{enumerate}
\end{lemma}

\begin{PROOF}{\ref{810.1}}
    If in $({\cB}_{\mathbf x},\le_{\mathbf x})$ there is a maximal member then we let $\mathbf y = \mathbf x$. Otherwise, as it is directed (see clause (\ref{71.f}) of Definition~\ref{7g.1}) and $\|\mathbf x\| \le\aleph_0$ (because $\mathbf x \in \mathbf K_{\le \aleph_0}$), there is a strictly $\leq_{\mathbf x}$--increasing cofinal sequence $\langle B_n:n<\omega\rangle$.  Let $Y_n = \suc_{B_n}(\rt_{\mathbf x})$.

    Note that for each $m_1<m_2$, the set $Y_{m_1} \cap B_{m_2}$ is an almost front of $B_{m_2}$ (so also it is almost above $Y_{m_2}$). Hence for $m_1<m_2\le n$ we have that $Y_{m_1} \cap B_n$ is an almost front of $B_n$ which is almost above $Y_{m_2} \cap B_n$. Consequently we may choose $B^*_n\in \seb(B_n)$ such that each $Y_\ell\cap B^*_n$ is a front of $B^*_n$ and $Y_\ell\cap B^*_n$ is above $Y_{\ell+1}\cap B^*_n$ (for all $\ell<n$). Moreover, we may also require that
    
    \begin{align}\label{otimeszero}
    \text{ for each $\ell<n$ and $\eta\in Y_\ell\cap B^*_n$ we have $(B_\ell)_{\geq\eta}\leq^*_{M_{\mathbf x}}(B^*_n)_{\geq\eta}$}
    \end{align}
    (remember Observation~\ref{a11}(\ref{28.10})).
    
    Fix a list $\langle\rho_\ell:\ell<\omega\rangle$ of all members of $M_{\mathbf
      x}$ (possibly with repetitions). By induction on $n<\omega$ choose $\nu_n$
    such that
    \begin{align}
    \label{x.a} &\nu_n\in Y_n\cap B^*_n=\suc_{B^*_n}(\rt_{\mathbf x})\\
     \label{x.b}&\text{if $\ell<n$, then $\nu_n,\nu_\ell$ are $<_{M_{\mathbf
            x}}$--incompatible (i.e., $\nu_\ell\parallel_{ M_{\mathbf x}}\nu_n$),}\\
     \label{x.c}&\text{if $\ell<n$ and $\rho_\ell\neq\rt_{\mathbf x}$, then $\rho_\ell\parallel_{M_{\mathbf x}} \nu_n$}.
    \end{align}
    [Why is the choice possible? By the demand (\ref{51f}) of Definition~\ref{a5-1}
    applied to $\nu=\rt_{\mathbf x}$ and $F=\{\nu_\ell,\rho_\ell:\ell<n\}
    \setminus \{\rt_{\mathbf x}\}$.] 
    
    We define
    \[B^*=\{\rt_{\mathbf x}\}\cup\bigcup\{B^*_n\cap (M_{\mathbf x})_{\geq \nu_n}:
    n<\omega\}.\] 
    This set $B^*$ is clearly a countable well-founded tree, $B^*\in\CWT(M_{\mathbf x})$  with root $\rt_{\mathbf x}$ and $\suc_{B^*}(\rt_{\mathbf x})= \{\nu_n:n<\omega\}$.
    
    [Why? It should be clear that conditions (\ref{51a})--(\ref{51d}) of
    Definition~\ref{a5-1} hold, $\rt(B^*)=\rt_{\mathbf x}$ and $\suc_{B^*}(\rt_{\mathbf x})= \{\nu_n:n<\omega\}$. To verify clause (\ref{51e}) 
    suppose $\eta,\nu\in B^*$ are $<_{M_{\mathbf x}}$--incomparable. Then both $\eta\neq\rt_{\mathbf x}$ and $\nu\neq\rt_{\mathbf x}$, so $\eta,\nu\in
    \bigcup\limits_{n<\omega} (B^*)_{\nu_n}$. If, for some $n$, we have $\eta,\nu\in B^*_n\cap (M_{\mathbf x})_{\geq\nu_n}$, then they are $<_{M_{\mathbf x}}$--incompatible as $B^*_n\subseteq B_n$ and $B_n$ satisfies
    \ref{a5-1}(\ref{51e}). Otherwise, for some distinct $\ell,n$ we have $\eta\in
    B^*_\ell\cap (M_{\mathbf x})_{\geq\nu_\ell}$ and $\nu\in B^*_n\cap (M_{\mathbf
      x})_{\geq\nu_n}$.
      Now, if we could find $\rho\in M_{\mathbf x}$
      such that $\rho\geq_{M_{\mathbf x}} \eta$
      and $\rho\geq_{M_{\mathbf x}}\nu$,
      then $\nu_\ell,\nu_n$ would be compatible contradicting \eqref{x.b},
      so $B^*$ indeed satisfies clause (\ref{51e}) of Definition~\ref{a5-1}.
      Finally, to verify (\ref{51f}) suppose $\nu\in B^*\setminus\max(B^*)$
      and $F\subseteq M_{\mathbf x} \setminus (M_{\mathbf x})_{\leq\nu}$ is finite.
      If $\nu_n \leq_{M_{\mathbf x}} \nu$ for some $n$, then the properties of $B^*_n$ apply.
      So suppose $\nu=\rt_{\mathbf x}$. Choose $m$ so that $F\subseteq \{\rho_\ell:\ell<m\}$
      and use condition \eqref{x.c} to argue that for all $n\geq m$ and $\rho\in F$ we have $\nu_n\parallel_{M_{\mathbf x}}\rho$.]
    
    Also:
    \begin{quote}
     $B\leq^*_{M_{\mathbf x}} B^*$ for all $B\in\cB_{\mathbf x}$.
    \end{quote} 
    [Why? Since $\leq^*_{M_{\mathbf x}}$ is a partial order and by the choice of $B_n$, it is enough to show that for each $n<\omega$ we have $B_n\leq^*_{M_{\mathbf x}} B^*$, i.e., that every almost front of $B_n$ is an
    almost front of~$B^*$. To this end suppose that $Z\subseteq B_n$ is an
    almost front of $B_n$ for some $n<\omega$. If $Z=\{\rt_{\mathbf x}\}$, then
    there is nothing to do, so suppose $Z\subseteq B_n\setminus\{\rt_{\mathbf
      x}\}$, i.e., $Z\subseteq\bigcup\{(B_n)_{\geq\rho}:\rho\in Y_n\}$.
    Plainly, the set
    \[X=\{\rho\in Y_n: Z\mbox{ is not an almost front of }
    (B_n)_{\geq\rho}\}\]
    is finite and hence for some $m>n$ we have $X\subseteq
    \{\rho_\ell:\ell<m\}$. Then for every $k>m$ we have: 
    \ENUMplusONE\begin{enumerate}[(a)]
    \item\label{en.a} The element $\nu_k$ is incompatible with every $\nu\in X$,
    \item\label{en.b} The set $Y_n\cap (B^*_k)_{\geq \nu_k}$ is a front of $(B^*_k)_{\geq \nu_k}$,
    \item\label{en.c} $(B_n)_{\geq\eta}\leq^*_{M_{\mathbf x}} (B^*_k)_{\geq\eta}$ 
    for every $\eta\in Y_n\cap (B^*_k)_{\geq \nu_k}$ (by \eqref{otimeszero}),
    \item\label{en.d} The set $Z\cap (B_n)_{\geq\eta}$ is an almost front of $(B_n)_{\geq\eta}$ for every $\eta\in Y_n\cap
    (B^*_k)_{\geq \nu_k}$, and thus
    \item\label{en.e} The set  $Z\cap (B^*_k)_{\geq\eta}$ is an almost front of $(B_k^*)_{\geq\eta}$ for every $\eta\in Y_n\cap
    (B^*_k)_{\geq \nu_k}$.
    \item\label{en.f} Finally, $Z$ is an almost front of $(B^*_k)_{\geq\nu_k}$ (by Observation~\ref{a11}(\ref{28.9}) and (\ref{en.b})+(\ref{en.e})).
    \end{enumerate}
    Since $\suc_{B^*}(\rt_{\mathbf x})=\{\nu_k:k<\omega\}$, we know that $\{\nu_k:m<k<\omega\}$ is an almost front of~$B^*$. Therefore, by
    Observation~\ref{a11}(\ref{28.9}) and (\ref{en.f}), we conclude that $Z$ is an almost front of $B^*$.]

    Lastly, we define $\mathbf y$:
    \begin{itemize}
    \item[--] $(M_{\mathbf y},<_{M_{\mathbf y}})= (M_{\mathbf x},
    <_{M_{\mathbf x}})$,
    \item[--] ${\cA}^{\mathbf y}_\nu=\cA^{\mathbf x}_\nu$ iff: $\nu \in M_{\mathbf x} \backslash \{\rt_{\mathbf x}\}$, and ${\cA}^{\mathbf y}_{\rt_{\mathbf x}} = \cA^{\mathbf x}_{\rt_{\mathbf x}} \cup
    \{B^*\}$, 
    \item[--] $B_1 \le_{\mathbf y} B_2$ if and only if $B_1\le_{\mathbf x} B_2$ or $B_1 \in A^{\mathbf y}_{\rt_{\mathbf x}} \wedge B_2 = B^*$. 
    \end{itemize}
    It should be clear that ${\mathbf y}\in {\mathbf K}_{\leq\aleph_0}$ is as
    required.
\end{PROOF}

\begin{proof}[Proof of Lemma \ref{8h.10}(\ref{810.2}),(\ref{810.3}),(\ref{810.4})]
    Straightforward; see also Lemmas~\ref{8h.11},
    \ref{8h.15} below.
\end{proof}

\begin{lemma}\label{8h.11}
    Assume that ${\mathbf x}\in {\mathbf K}_{{\leq}\aleph_0}$ and $B\in \cB_{\mathbf
    x}$ is $\leq_{\mathbf x}$--maximal. Then for some ${\mathbf y}\in {\mathbf K}_{{\leq}\aleph_0}$ and $B'\in \cB_{\mathbf y}$ we have
    
    \ENUMplusONE\begin{enumerate}
        \item[(a)] ${\mathbf x}\leq {\mathbf y}$, $M_{\mathbf x}= M_{\mathbf y}=M$, and
        
        \item[(b)] $B'\in \cB_{\mathbf y}$ is $\leq_{\mathbf y}$--maximal,
        
        \item[(c)] if $\nu\in B'\setminus \max(B')$ and $\rho\in M\setminus  M_{\leq\nu}$, then for all but finitely many $\varrho\in\suc_{B'}(\nu)$ we have $\rho \parallel_M \varrho$ or for all 
        but 
        finitely many $\varrho \in \suc_{B'}(\nu)$ we have $\rho \lhd_{M} \varrho.$ 
    \end{enumerate}
\end{lemma}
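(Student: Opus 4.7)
The plan is to first construct a ``thinned'' positive subtree $B' \in \psb(B)$ satisfying condition (c), and then enlarge $\bold x$ by adjoining $B'$ (together with its restrictions $B' \cap M_{\geq\nu}$ to appropriate $\cA$-families) and declaring $B'$ to be a new $\leq_{\bold y}$-top. Since $B$ is $\leq_{\bold x}$-maximal and $(\cB_{\bold x},\leq_{\bold x})$ is directed (clause (f) of \ref{7g.1}), every $C \in \cB_{\bold x}$ already satisfies $C \leq_{\bold x} B$, so placing $B'$ above $B$ automatically makes it dominate all of $\cB_{\bold x}$.

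The key tool for building $B'$ is clause (f) of Definition \ref{a5}(1): at any non-maximal node $\nu$ of a tree in $\CWT(M)$ and any finite $F \subseteq M \setminus M_{\leq\nu}$, infinitely many $\varrho \in \suc_B(\nu)$ are $\parallel_M$-incompatible with every element of $F$. Since $M$ is countable, for each $\nu \in B \setminus \max(B)$ fix an enumeration $M \setminus M_{\leq\nu} = \{\rho^\nu_m : m < \omega\}$. By induction on $n$, pick pairwise distinct $\varrho^\nu_n \in \suc_B(\nu)$ with $\varrho^\nu_n \parallel_M \rho^\nu_k$ for every $k \leq n$; at stage $n$, applying \ref{a5}(1)(f) with $F = \{\rho^\nu_0,\ldots,\rho^\nu_n\}$ supplies infinitely many candidates, from which one avoids the finitely many earlier $\varrho^\nu_{<n}$. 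Set $T_\nu = \{\varrho^\nu_n : n < \omega\}$ and let $B' \subseteq B$ be determined recursively by $\rt(B') = \rt(B)$ and $\suc_{B'}(\nu) = T_\nu$. Condition (c) is immediate: if $\rho = \rho^\nu_k$, then $\varrho^\nu_n \parallel_M \rho$ for every $n \geq k$. To see $B' \in \psb(B)$, the only non-obvious clause is \ref{a5}(1)(f) for $B'$, which holds because any finite $F' \subseteq M \setminus M_{\leq\nu}$ lies in $\{\rho^\nu_0,\ldots,\rho^\nu_K\}$ for some $K$, so cofinitely many elements of $T_\nu$ are incompatible with every $\rho \in F'$.

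To construct $\bold y$, keep $(M_{\bold y},<_{M_{\bold y}}) = (M_{\bold x},<_{M_{\bold x}})$; set $\cA^{\bold y}_\nu = \cA^{\bold x}_\nu \cup \{B' \cap M_{\geq\nu}\}$ for $\nu \in B'$ and $\cA^{\bold y}_\nu = \cA^{\bold x}_\nu$ otherwise; and extend $\leq_{\bold x}$ to $\leq_{\bold y}$ by declaring $C \leq_{\bold y} B'$ for every $C \in \cB_{\bold x}$ (and $B' \leq_{\bold y} B'$). Clause \ref{7g.1}(h) is preserved because $(B' \cap M_{\geq\nu}) \cap M_{\geq\mu} = B' \cap M_{\geq\mu}$ whenever $\mu \geq_M \nu$ are both in $B'$. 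Clause \ref{7g.1}(g) holds by transitivity of $\leq^*_M$ (see \ref{a11}(2)): $B' \in \psb(B)$ gives $B \leq^*_M B'$ via \ref{a11}(3), while $C \leq^*_M B$ for every $C \in \cB_{\bold x}$ by $\leq_{\bold x}$-maximality of $B$ together with directedness. Since $B'$ is the new top, $(\cB_{\bold y},\leq_{\bold y})$ is directed and $B'$ is $\leq_{\bold y}$-maximal, yielding (a) and (b); (c) was established above. The main technical point is bookkeeping---running countably many $\omega$-step inductions simultaneously and verifying the closure clauses of \ref{7g.1} for the adjoined substructures---but neither poses a genuine mathematical difficulty; the creative content is the diagonal use of \ref{a5}(1)(f) securing (c).
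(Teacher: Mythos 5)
Your proposal is correct and follows essentially the same route as the paper: diagonally select, for each non-maximal node $\nu$ of $B$, an infinite set of successors that eventually avoids compatibility with every element of an enumeration of $M\setminus M_{\leq\nu}$ (using \ref{a5}(1)(f)), assemble these into a positive subtree $B'$, adjoin its cones to the $\cA_\nu$'s, and place $B'$ as the new $\leq_{\bold y}$-top, with maximality of $B$ plus directedness and \ref{a11}(2),(3) giving clause \ref{7g.1}(g). Your write-up in fact verifies the membership of $\bold y$ in $\bold K$ in more detail than the paper does.
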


\begin{PROOF}{\ref{8h.11}}
      Fix a list $\langle\rho_\ell:\ell<\omega\rangle$ of all members of $M_{\mathbf x}$ (possibly with repetitions). For each $\eta\in B\setminus
      \max(B)$ by induction on $n<\omega$ we choose $\nu_{\eta,n}$ such that
      
    \ENUMplusONE\begin{enumerate}
        \item[--] $\nu_{\eta,n}\in\suc_B(\eta)$,
        
        \item[--] $\nu_{\eta,n}\neq\nu_{\eta,k}$ for $k<n$ (and hence $\nu_{\eta,n}\parallel\nu_{\eta,k}$ for $k<n$),
        
        \item[--] if $k<n$ and $\rho_k\notin M_{\leq\eta}$, then $\rho_k \parallel
        \nu_{\eta,n}$.
    \end{enumerate}
    
    [Why possible? arriving to $n = m + 1,$ if for some $\nu \in \suc_{B}(\eta)$ we have $\nu \leq \rho_{ {m}}$ then clearly we can choose $\nu_{\eta, n},$ otherwise assume there is no $\nu$ as required, \underline{then} $\nu \in \suc_{B}(\eta) \setminus \{ \nu_{\eta, \ell}: \ell < m \} \Rightarrow \rho_{m} < \nu.$] 
    
    Next, by downward induction on $\eta\in B$ we define
    \[B_\eta=\bigcup\big\{B_{\nu_{\eta,n}}:n<\omega\big\}\cup\{\eta\}.\]
    Lastly we define ${\mathbf y}$ so that:\\
    $(M_{\mathbf y},<_{\mathbf y})=(M_{\mathbf x},<_{\mathbf x})$,\\
    $\cA^{\mathbf y}_\eta=\cA^{\mathbf x}_\eta$ if $\eta\in M_{\mathbf x}$
      but $\eta\notin B\setminus \max(B)$, and\\
    $\cA^{\mathbf y}_\eta=\cA^{\mathbf x}_\eta\cup\{B_\eta\}$ if $\eta\in B\setminus
    \max(B)$,\\ 
    $\cB_{\mathbf y}=\cB_{\mathbf x}\cup\{B_{\rt_{\mathbf x}}\}$ and for $B',B''\in\cB_{\mathbf y}$ we let:
        \begin{quote}
            $B'\leq_{\mathbf y} B''$ if and only if
            $B'\leq_{\mathbf x}B''$ or $B''= B_{\rt_{\mathbf x}}$.\qedhere
        \end{quote}
\end{PROOF}

\begin{lemma}\label{8h.15} 
    \ENUMplusONE\begin{enumerate}
    \item\label{815.1}
     If $\mathbf x \in \mathbf K_{\le \aleph_0}$, $Y \in \af_{\mathbf x}$
    and $Z \subseteq Y$ \then \, for some $\mathbf y\in\mathbf K_{\le \aleph_0}$
    we have $\mathbf x \le_{\mathbf K} \mathbf y$ and either $Z\in D^{\mathbf y}_Y$
    or $(Y \backslash Z) \in D^{\mathbf y}_Y$.
    \item \label{815.2}
     Moreover, if $h$ is a function with domain $Y$, \then \, above we
    can demand that for some $B \in \cB_{\mathbf y}$, $Y\cap B$ is a
    front of $B$ and for some front $Y'$ of $B$ which is below $Y$ and a
    one-to-one function $h'$ with domain $Y'$ we have
    \[\rho \in Y' \wedge \varrho \in Y \cap B \wedge \rho \le_{M_{\mathbf y}}
    \varrho \quad\Rightarrow\quad h(\rho) = h'(\varrho).\]
    (Note that possibly $Y' = \{\rt_{\mathbf y}\}$ and then $h\rest (Y \cap B)$ is
    constant.) 
    \end{enumerate}
\end{lemma}

\begin{PROOF}{\ref{815.1}}
    By Lemma~\ref{8h.10}(\ref{810.1}) \wilog \, there is $B\in\cB_{\mathbf x}$ such
    that $B$ is $\le_{\mathbf x}$-maximal in ${\cB}_{\mathbf x}$; clearly $Y \cap B$ is an almost front of $B$ and so \wilog \, $Y\subseteq B$.
    
    We know that $B[{\le}Y]:=\{\rho\in B:(\exists \nu)[\rho\le_{M_{\mathbf
        x}}\nu\in Y]\}$ has no $\omega$--branch, so by $<_{M_{\mathbf
        x}}$--downward induction on $\nu\in B[{\le}Y] $ we choose $({\mathbf
      t}_\nu,Y_\nu)$ such that (where $M=M_{\mathbf x}$, of course):
    \ENUMplusONE\begin{enumerate}
    \item[(a)]  $\mathbf t_\nu \in \{0,1\}$ and:\\
    \plusindent if $\mathbf t_\nu = 1$, then $Y_\nu \subseteq M_{\ge \nu} \cap
    Z$ , 
    \\
    \plusindent if $\mathbf t_\nu = 0$, then $Y_\nu \subseteq M_{\ge \nu}\cap
    (Y \backslash Z)$, 
    \item[(b)]  $Y_\nu=\max(B'_\nu)$ for some $B'_\nu\in\psb_M(B_{\ge\nu})$,
    \item[(c)]  if $\nu\in Y$ then $Y_\nu=\{\nu\}$ and $\mathbf t_\nu =$ (the truth value of $\nu \in Z$),
    \item[(d)]  if $\nu\in B[{\le}Y] \backslash Y$ then:
    \plusindent for every finite set $F\subseteq M\setminus M_{\leq \nu}$
      there are infinitely many $\varrho\in\suc_B(\nu)$ such that $(\forall\rho
      \in F)(\rho\parallel \varrho)$ and ${\mathbf t}_\varrho = {\mathbf t}_\nu$, 
    \plusindent       $Y_\nu = \bigcup\{Y_\varrho:\varrho \in \suc_B(\nu)$
    and $\mathbf t_\varrho = \mathbf t_\nu\}$.
    \end{enumerate}
    This is easily done and so $\mathbf t_{\rt_{\mathbf x}}$ is well defined.
    For $\nu\in B[{\le}Y]$ we let
    \[B^*_\nu=\{\rho \in B_{\ge\nu}:\mbox{ for some }\varrho \in Y_\nu
    \mbox{ we have }\varrho \le_M \rho \vee \rho \le_M\varrho\}.\]
    Now define $\mathbf y$ by adding $B^*_\nu$ to $\cA^{\mathbf x}_\nu$
    for every $\nu \in B[{\le}Y]$, and check.
\end{PROOF}

\begin{PROOF}{\ref{815.2}}
    First note that by Lemmas~\ref{8h.10}(\ref{810.1}) and~\ref{8h.11} we may assume that there is $B\in\cB_{\mathbf x}$ such that $B$ is $\leq_{\mathbf x}$--maximal, the set $Y$ is a front of~$B$, and:
    
    \begin{quote}
        if $\nu\in B\setminus \max(B)$ and $\rho\in M\setminus
        M_{\leq\nu}$, \\ then for all but finitely many $\varrho\in \suc_B(\nu)$ we
        have $\rho\parallel_M \varrho$.  
    \end{quote}
    
    Now note: if $h':Y'\longrightarrow A$, $Y'\in\frt(B')$, $Z = \{\eta\in B':
    \suc_{B'}(\eta)\subseteq Y'\}$ is a front of $B'$ and $h'\rest\suc_{B'}(\eta)$ is one-to-one for all $\eta\in Z$, \then \, we can
    find $B''\in\psb_M(B)$ such that $h' \rest B'' \cap Y'$ is one-to-one. So we
    may follow similarly as in (\ref{815.1}).
\end{PROOF}

Let us recall the following definition. 

\begin{definition}[P-points and Q-points]\label{z9}
    Let $D$ be a nonprincipal ultrafilter on a countable set $\Dom(D)$. 
    
    We say $D$ is a $Q$-point if: 
      whenever  $f$ is a finite-to-one function with
      domain $\Dom(D)$, then $f\rest A$ is one-to-one for some $A \in D$.
    
    We say that $D$ is a $P$-point if: 
      for each sequence $\langle A_n:n<\omega\rangle$ of sets from $D$
      there is an $A\in D$
      such that $A\setminus A_n$ is finite for each $n<\omega$. 
\end{definition}

We can conclude the main result of this section.

\begin{theorem}\label{8h.17}
    Assume CH. There is a $\mathbf x \in \mathbf K$ such that:
    \ENUMplusONE\begin{enumerate}[(a)]  
    \item\label{8h17a} 
    \ENUMplusONE\begin{enumerate}  
    \item[$(\alpha)$] ${\cA}^{\mathbf x}_\eta \ne \big\{\{\eta\}\big\}$
    for $\eta\in M_{\mathbf x}$,
    \item[$(\beta)$] ${\cB}_{\mathbf x} = \cA^{\mathbf x}_{\rt(\mathbf x)}
    \setminus\big\{\{\rt_{\mathbf x}\}\big\}$ is $\aleph_1$--directed under $\le_{\mathbf x}$,
    \end{enumerate}
    \item\label{8h17b} 
      if $Y\in\frt^-_{\mathbf x}$, then
    \ENUMplusONE\begin{enumerate}
    \item[$(\alpha)$] $D^{\mathbf x}_Y$ is a non-principal ultrafilter on $Y$, and 
    \item[$(\beta)$]  $D^{\mathbf x}_Y$ is a $Q$-point, see Definition~\ref{z9},
    \end{enumerate}
    \item\label{8h17c} 
      if $B_1\in\cB_{\mathbf x}$, \then \, for some $B_2\in
    \cB_{\mathbf x}$ we have $B_1\le_{\mathbf x} B_2$ and $B_1\cap
    \suc_{B_2}(\rt_{\mathbf x})=\emptyset$, moreover\footnote{Not a serious
    addition.  As always, the number of $\varrho\in \suc_{B_2}
    (\rt_{\mathbf x})$ failing this is finite.} 
    \[(\forall \varrho\in \suc_{B_2}(\rt_{\mathbf x}))(\exists^\infty
    \rho\in\suc_{B_1}(\rt_{\mathbf x}))[\varrho\le_{M_{\mathbf x}} \rho].\]
    \item\label{8h17d} 
      $\mathbf x$ is (see Definition~\ref{7g.14}): fat, big, large, and
      full. 
    \end{enumerate}
\end{theorem}

\begin{PROOF}{\ref{8h.17}}
    We choose ${\mathbf x}_\alpha\in {\mathbf K}_{\leq\aleph_0}$ by induction on $\alpha<\aleph_1$ so that
    \ENUMplusONE\begin{enumerate}
    \item[(i)]  if $\beta<\alpha<\aleph_1$, then ${\mathbf x}_\beta\leq_{\mathbf K}
      {\mathbf x}_\alpha$, 
    \item[(ii)] for each successor $\alpha$, there is a $\leq_{{\mathbf
          x}_\alpha}$--maximal element in $\cB_{{\mathbf x}_\alpha}$.
    \end{enumerate}
    We use a bookkeeping device to ensure largeness and bigness and
    \ITEMIZEplusONE\begin{itemize}
    \item[--] for $\alpha=0$ we use Lemma~\ref{8h.3},
    \item[--] for $\alpha$ limit we use Definition~\ref{7g.7}(\ref{77.3}) and Observation~\ref{7g.9}(\ref{79.2}), 
    \item[--] if $\alpha=\beta+1$, $\beta$ is limit, then we use Lemma~\ref{8h.15}(\ref{815.1}) (and the instructions from our bookkeeping device) to take
      care of the bigness,
    \item[--] if $\alpha=\beta+2$, $\beta$ is limit, then we use Lemma~\ref{8h.15}(\ref{815.2}) (and the instructions from our bookkeeping device) to take
      care of the largeness,
    \item[--] if $\alpha=\beta+3$, $\beta$ is limit, then we use Lemma~\ref{8h.10}(\ref{810.3},\ref{810.4}) (and the instructions from our bookkeeping device) to
      ensure that at the end $\mathbf x$ is fat and full,
    \item[--] if $\alpha=\beta+k$, $\beta$ is limit, $4\leq k<\omega$, then
    we ensure clause  (\ref{8h17d}).
    \end{itemize}
    In the end we let ${\mathbf x}=\bigcup\limits_{\alpha<\aleph_1} {\mathbf
      x}_\alpha$. Then ${\mathbf x}$ is fat, big, large and $\cB_{\mathbf x}$ is $\aleph_1$--directed. Note that clause (\ref{8h17b})$(\beta)$ follows from the
    largeness. 
\end{PROOF}

\begin{definition}\label{8j.20d}
    \ENUMplusONE\begin{enumerate}
    \item We say that ${\mathbf x}\in {\mathbf K}$ is {\em nice\/} if it satisfies
      conditions (\ref{8h17a})--(\ref{8h17d}) of Theorem~\ref{8h.17}. The class of all nice ${\mathbf
        x}$ is denoted by ${\mathbf K}_{\rm n}$.
    \item An ${\mathbf x}\in {\mathbf K}$ is {\em reasonable\/} if it satisfies (\ref{8h17a}),
      (\ref{8h17c}) of Theorem~\ref{8h.17}. Let ${\mathbf K}_{\rm r}$ be the set of all $\mathbf x \in \mathbf K$ which are reasonable.
    \item Let ${\mathbf K}_{\rm u}$ be the set of $\mathbf x \in \mathbf K_{\rm r}$ for
      which clause (\ref{8h17b})$(\alpha)$ of Theorem~\ref{8h.17} holds.
    \item For $\mathbf x \in \mathbf K$ we say that $\cI \subseteq \cA_{\mathbf x}$ (see Definition~\ref{7g.1}(\ref{71.b})) 
       is {\em $\mathbf x$--dense\/} iff:\\
     for every $B_1\in \cB_{\mathbf x}$ there is $B_2$ such that
    \ENUMplusONE\begin{enumerate}
    \item[$(\alpha)$] $B_1 \le_{\mathbf x} B_2 \in \cB_{\mathbf x}$, and
    \item[$(\beta)$] if $A\subseteq M_{\mathbf x}\setminus \{
    \rt_{\mathbf x}\}$ is finite, then for some $\nu$ we have
    \[\nu\in\suc_{B_2}(\rt_{\mathbf x}),\quad (B_2)_{\geq \nu}\in\cI,
    \quad\mbox{ and }\quad (\forall \rho\in A)(\rho\parallel \nu).\]
    \end{enumerate}
    \item For $\mathbf x \in \mathbf K$ we say $\cI$ is {\em $\mathbf x$--open\/}
    if $\cI \subseteq \cA_{\mathbf x}$  and
    \begin{quote}
      if $B_1 \in \cI$ then $\seb(B_1)\cap\cA_{\mathbf x}\subseteq\cI$.  
    \end{quote}
    \item Let ${\mathbf K}{\rm _g}$ be the class of $\mathbf x \in \mathbf K_{\rm r}$
      which are {\em good}, which means: if $\cI$ is $\mathbf x$--dense, $\mathbf
      x$--open and $B_1\in \cB_{\mathbf x}$ \then \, for some $B_2\in\cB_{\mathbf
        x}$ we have $B_1\le_{\mathbf x} B_2$ and $(B_2)_{\ge \eta}\in\cI$ for all
      but finitely many $\eta\in\suc_{B_2}(\rt_{\mathbf x})$.
    \item We say that  $\mathbf x \in \mathbf K$  is {\em ultra\/} if it is both nice and
    good. Let $\mathbf K_{\ut}$ be the class of $\mathbf x$ which are ultra, i.e., $\mathbf K_{\ut}=\mathbf K_{\rm g}\cap \mathbf K_{\rm n}$.
    \end{enumerate}
\end{definition}

\begin{theorem}\label{8j.28}
    Assume $\diamondsuit_{\aleph_1}$.   Then there exists an ultra $\mathbf x \in{\mathbf K}$. 
\end{theorem}

 %
 
\begin{PROOF}{\ref{8j.28}}
    We repeat the proof of Theorem~\ref{8h.17} but at limit stages $\delta<\aleph_1$ we use additionally $\diamondsuit_{\aleph_1}$ to take care of the additional demand $\mathbf x \in \mathbf K_{\rm g}$ here. 
    
    So we are given: a limit ordinal  $\delta<\aleph_1$ and a set $\cJ \subseteq  \cA_{\mathbf x_\delta}$ 
    such that for some ${\mathbf y}\in \mathbf K$ with $\mathbf x_\delta\le\mathbf y$ \ and some $\cI \subseteq \cA_{\mathbf y}$ we have
    
    \begin{quote}
        The set $\cI$ is dense open in $\cA_{\mathbf y}$, satisfies $\cJ=\cI\cap \cA_{{\mathbf x}_\delta}$, and moreover: There is a countable elementary submodel $N\prec\cH(\aleph_2)$  \\ with $({\mathbf y},\cI)\in N$ and $({\mathbf x}_\delta,\cJ)=({\mathbf y}\rest N, \cI\cap N)$, so $M_{{\mathbf x}_\delta}=M_{\mathbf y}\rest N$, etc.
    \end{quote}
    
    Let $\langle B^0_\ell:\ell<\omega\rangle$ be an increasing cofinal subset of $(\cB_{{\mathbf x}_\delta},\leq_{{\mathbf x}_\delta})$. For every $\ell$ there is $B^1_\ell\in \cB_{{\mathbf x}_\delta}$ such that $B^0_\ell
    \leq_{{\mathbf x}_\delta} B^1_\ell$, and for every finite $A\subseteq M_{{\mathbf x}_\delta}\setminus \{\rt({\mathbf x}_\delta)\}$ there is $\nu\in
    \suc_{B^1_\ell}(\rt( {\mathbf x}_\delta))$ such that
    \[(\forall \rho\in A)(\rho\parallel \nu)\quad \mbox{ and }\quad (B^1_\ell)_{\geq \nu}\in\cI.\] 
    Clearly, for every $\ell$ for some $k(\ell)>\ell$ we have $B^1_\ell\leq _{{\mathbf x}_\delta} B^0_{k(\ell)}$. We can choose $\langle
    \ell_n:n<\omega\rangle$ so that $k(\ell_n)<\ell_{n+1}$.  Let $B_n=B^1_{\ell_n}$. We continue as in Lemma~\ref{8h.10}(\ref{810.1}) using the $\langle B_n:n<\omega\rangle$ and, when choosing $\nu_n$, demanding additionally that $(B_n)_{\geq \nu_n}\in\cI$. (Note that $(B_n)_{\geq \nu_n}\in\cI$ implies $(B^*_n)_{\geq\nu_n}\in\cI$ for $B^*_n$ as there.)
\end{PROOF}

\begin{proposition}\label{8j.17}
    Assume $\mathbf x \in \mathbf K_{\rm n}$. 
    \ENUMplusONE\begin{enumerate}[(i)]
    \item \label{817.i}   If $B \in {\cB}_{\mathbf x}$ and $Y_1,Y_2 \in \frt(B)$ and $Y_2$ is above $Y_1$, \then \, $h^{\mathbf x}_{Y_2,Y_1}$ exemplifies $D^{\mathbf
        x}_{Y_1}\le_{\RK} D^{\mathbf x}_{Y_2}$. 
    \item \label{817.ii}   The family $\{D^{\mathbf x}_Y:Y \in\frt^-_{\mathbf x}\}$ is $\geq_{\RK}$--directed (even $\aleph_1$ directed). 
    \item \label{817.iii}  If $Y\in\af^-_{\mathbf x}$, then $\leq_{\RK}$--below $D^{\mathbf x}_Y$ there is no $P$-point. 
    \end{enumerate}
\end{proposition}

\begin{PROOF}{\ref{8j.17}}
    (\ref{817.i})\quad  Follows from Observation~\ref{7g.5g}(\ref{75.6}).
    
    \noindent (\ref{817.ii})\quad By (\ref{817.i})  and the directedness of $\cB_{\mathbf
      x}$. 
    
    \noindent (\ref{817.iii})  \quad Let $B_1 \in \cB_{\mathbf x}$ be such that $B_1 \cap
    Y$ is an almost front of~$B_1$. Suppose that $h:Y\longrightarrow \bbN$ is
    such that $h^{-1}[\{n\}]=\emptyset \mod D^{\mathbf x}_Y$ for every $n$, hence
    there is $A_n \in \cB_{\mathbf x}$ which witnesses this. Assume towards
    contradiction that $h(D^{\mathbf x}_Y)$ is a $P$-point; \wilog \, $h$ is onto $\bbN$.
    As $\cB_{\mathbf x}$ is $\aleph_1$--directed we may pick $B_2\in\cB_{\mathbf x}$
    such that $A_n \le_{\mathbf x} B_2$ (for all $n<\omega$) and $B_1 \le_{\mathbf
      x} B_2$. 
    
    As $\mathbf x$ is large, we may apply the Definition~\ref{7g.14} of large to
    the pair $(B_2,h')$ where $h'(\eta) = h(\nu)$ when $\nu
    \le_{M_{\mathbf x}}\eta\in\max(B)$ and zero if there is no such~$\nu$. So
    there are $B_3,Y_3$ such that
    \begin{itemize}
    \item[--] $B_2\leq_{\mathbf x} B_3$,
    \item[--] $Y_3$ is a front of $B_3$  below $Y\cap B_3$,
    \item[--] for $\eta,\nu \in Y\cap B_3$ we have:\quad $h(\eta) =
      h(\nu)\ \Leftrightarrow\ (\exists \rho \in Y_3)(\rho\le_{M_{\mathbf
          x}}\eta\wedge\rho \le_{M_{\mathbf x}}\nu)$.
    \end{itemize}
    Let $Z=\suc_{B_3}(\rt_{\mathbf x})$. If $Y_3=\{\rt_{\mathbf x}\}$,
    then for some $n$ we have $h^{-1}[\{n\}]\in D^{\mathbf x}_Y$, a
    contradiction. Therefore $Y_3\neq\{\rt_{\mathbf x}\}$ and thus $\rt_{\mathbf x}\notin
    Y_3$, so $Y_3$ is above~$Z$. Clearly, $D^{\mathbf x}_Z\le_{\RK} h(D^{\mathbf
      x}_Y)$ and  hence $D^{\mathbf x}_Z$ is a $P$-point.
    
    By clauses (\ref{8h17c}) and (\ref{8h17d}) of Theorem~\ref{8h.17} there is $B_4\in\cB_{\mathbf x}$
    such  that $B_3\le_{\mathbf x} B_4$, $B_4\cap Z$ is a front of $B_4$  and
    \[(\forall\varrho\in\suc_{B_4}(\rt_{\mathbf x}))(\exists^\infty \rho
    \in\suc_{B_3}(\rt_{\mathbf x}))[\varrho\le_{M_{\mathbf x}}\rho].\]
    For each $\varrho\in\suc_{B_4}(\rt_{\mathbf x})$ let $Z_\varrho = \{\rho \in
    Z:\varrho \le_{M_{\mathbf x}} \rho\}$, so $\langle Z_\varrho:\rho \in
    \suc_{B_4}(\rt_{\mathbf x})\rangle$ is a partition of $Z$, and $Z_\varrho=
    \emptyset \mod D^{\mathbf x}_Z$ for each $\varrho$. But clearly there is no $Z' \in D^{\mathbf x}_Z$ such that $Z' \cap Z_\varrho$ is finite for every $\varrho \in \suc_{B_4}(\rt_{\mathbf x})$, contradiction to ``$D^{\mathbf x}_Z$
    is a $P$-point''. 
\end{PROOF}

\newpage

\section{Basic connections to forcing}

\begin{definition}\label{k3}
    For a forcing notion $\bbQ$ and $p \in \bbQ$ we define $ \Game^{\seb}_p = \Game^{\seb}_{\bbQ,p}$, the strong bounding game between the null player NU and the bounding player BND as follows:
    \begin{itemize}
    \item[--] A play last $\omega$ moves, and
    \item[--] in the $n$-th move:
    \ENUMplusONE\begin{enumerate}
    \item[$*$]  first the NU player gives a (non-empty) tree $\cT_n$ with $\omega$ levels and no maximal node and a $\bbQ$-name $\name F_n$ of a
      function with domain $\cT_n$ such that 
    \[\eta\in\cT_n\quad \Rightarrow\quad p\Vdash_{\bbQ}\mbox{`` }
    \name{F}_n(\eta) \in \suc_{\cT_n}(\eta)\mbox{ ''},\]
    \item[$*$]   then BND player chooses $\eta_n \in \cT_n$.
    \end{enumerate}
    \item[--] In the end, the BND player wins the play $\langle \cT_n, \eta_n:
      n<\omega \rangle$ \Iff \,   there is $q \in \bbQ$ above $p$ forcing that 
    \[\big(\forall n<\omega\big)\big(\exists k< \mbox{level}
    (\eta_n)\big) \big(\name F_n(\eta_n\rest k)\le_{\cT_n}\eta_n\wedge k\mbox{
      is even }\big),\]
    where $\eta_n\rest k$ is the unique $\nu\le_{\cT_n}\eta_n$ of level~$k$.
    \end{itemize}
    
    Omitting $p$ means NU chooses it in his first move.
    The game  $ \Game^{\seb}_{\bbQ}$ (without a parameter $p\in \bbQ$ is defined
    similarly, but here the first player NU also chooses a condition $p$ in 
    the first move. 
\end{definition}

\begin{definition}\label{k3x}
    A forcing notion $\bbQ$ is {\em strongly bounding\/} if for every
    condition $p\in\bbQ$ player BND has a winning strategy in the game $\Game^{\seb}_{\bbQ,p}$.
    \end{definition}
    \begin{definition}
    \label{7g.18} 
    \ENUMplusONE\begin{enumerate}
    \item\label{718.1} We say $\cP \subseteq [\bbN]^{\aleph_0}$ is big iff: for every $\mathbf c:\bbN \rightarrow \{0,1\}$ there is $A \in \cP$ such that $\mathbf c \rest A$ is constant.
    \item\label{718.2} For $B \in \CTW({}^{\omega >}\omega,\triangleleft)$ we say that
    a family $\cB \subseteq\psb(B)$ is big (in $B$) iff: for every $\mathbf c:\max(B)\longrightarrow \{0,1\}$ there is $B' \in \cB$ such
    that $\mathbf c \rest\max(B')$ is constant.
    \item\label{718.3} For $B \in \CTW({}^{\omega >}\omega,\triangleleft)$ we say that a
    family  $\cB \subseteq\psb(B)$ is large (in $B$) iff\\
        for every function $\mathbf c$ with domain $\max(B)$
        there is $B' \in \cB$ and front $Y$ of $B'$
          such that 
    \begin{quote}
       for every $\eta,\nu \in \max(B')$ we have
      \begin{quote}
     $\mathbf c(\eta)= \mathbf c(\nu)\ \Leftrightarrow\ (\exists \rho \in Y)(\rho
    \le_B \nu\ \wedge\ \rho \le_B \eta)$.
      \end{quote}
    \end{quote}
    \end{enumerate}
\end{definition}

\begin{theorem}\label{k2}
    Assume that:
    \ENUMplusONE\begin{enumerate}[(a)] 
    \item\label{k2a} $B\in\CWT(M)$ for a partial order $M$, \wilog\,
      $M=({}^{\omega>}\omega,\triangleleft)$, 
    \item\label{k2b} The forcing notion  $\bbQ$ is  strongly bounding.
    \item\label{k2c}
    \ENUMplusONE\begin{enumerate}
    \item[$(\alpha)$] forcing with $\bbQ$ preserves some
    non-principal ultrafilter on $\bbN$, \BOLD{or just}
    \item[$(\beta)$] $([\bbN]^{\aleph_0})^{\mathbf V}$ is big in $\mathbf V^{\bbQ}$,
      see Definition~\ref{7g.18}, 
    \end{enumerate}
    \item\label{k2d} $p \Vdash ``\name A \subseteq \max(B)$''.
    \end{enumerate}
    Then there are $B'\in \psb(B)$ and $q\in\bbQ$ such that $p \le q$ and 
    \[q \Vdash\mbox{ `` }\max(B')\subseteq \name \tau\mbox{ ''\quad
      \BOLD{or}\quad } q \Vdash\mbox{ `` }\max(B') \subseteq\max(B)
    \backslash\name\tau\mbox{ ''.}\]  
\end{theorem}

\begin{PROOF}{\ref{k2}} 
    We prove this by induction on $\Dp(B)$ (see Definition~\ref{a5-56}), for all
    such $B$'s. Let $\eta = \rt(B)$.

    \noindent \BOLD{Case 1}:  $\Dp(B) = 0$
    
    Trivial, as then $B = \{\eta\}$, i.e., $B$ is a singleton so $B' = B$ can
    serve. 
    
    \noindent
    \BOLD{Case 2}:  Dp$_{\mathbf x}(B) = 1$
    
    Then $\Dp(B_{\geq\nu})=0$ for all $\nu\in B \backslash \{\eta\}$.  Now, $|B
    \backslash \{\eta\}|=\aleph_0$ and we just need to find $p' \in \bbQ$ above $p$ such that $\{\nu \in B:\nu \ne \eta$ and $p'$ forces $\nu \in \name A$
    or forces $\nu \notin \name A\}$ is infinite.  As $\Vdash_{\bbQ}$ ``$\big([\bbN]^{\aleph_0}\big)^{\mathbf V}$ is big in ${\mathbf V}^{\bbQ}$'' (see
    clause  (\ref{k2c}) of our assumptions) this is possible.

    \noindent \BOLD{Case 3}:  $\alpha=\Dp(B) > 1$
    
    Let $Y = \suc_{B}(\eta)$. Then for $\nu \in Y$ we have $\Dp(B_{\geq\nu})<
    \alpha$, hence the induction hypothesis applies to $B_{\geq\nu}$. We may
    assume that if $\rho$ is not below $\eta$ then for all but finitely many $\nu\in Y$ we have $\nu\parallel \rho$ (cf.~the proof of Lemma~\ref{8h.11}).
    Let $\langle \nu_n:\nu \in \bbN\rangle$ list~$Y$. 
    
    We simulate a play of $\Game^{\seb}_{\bbQ,p}$ in which the BND player uses a
    winning strategy and the NU player acts so that in the $n$-th move:
    \begin{itemize}
    \item[--] $\cT_n = \big\{\langle B_0,\dotsc,B_{k-1}\rangle:k
    \in \bbN$, $B_\ell \in \psb(B_{\geq\nu_n})$ for $\ell < k$ and $B_{\ell+1}
    \subseteq B_\ell$ if $\ell +1 < k\big\}$, 
    \item[--] the relation $<_{\cT_n}$ is being an initial segment,
    \item[--] $\name F_n(\langle B_0,\dotsc,B_{k-1}\rangle)$ is $\langle B_0,\dotsc,B_{k-1},B'\rangle$ for some $B'\in \psb(B_{k-1})\cap
    {\mathbf V}$ such that 
    \begin{center}
    either\quad $\max(B')\subseteq \name A$\quad or\quad $\max(B')\cap \name
    A=\emptyset$.  
    \end{center}
    \end{itemize}
    There is such a function $\name F_n$ because of the induction hypothesis.
    
    Clearly we can do this.  As the player BND has used a winning strategy, BND
    has won the play so there is $q \in \bbQ$ stronger than $p$ and such that
      $q \Vdash$ ``for every $n$ for some even $k <
    \text{ level}_{\cT_n}(\eta_n)$ we have $\name F_n(\eta_n \rest k) \le_{\cT_n} \eta_n$".
    
    Hence by the choice of $(\cT_n,\name F_n)$, letting $\eta_n = \langle
    B_{n,0},\dotsc,B_{n,k(n)}\rangle$ we have:\\
      for some $\langle\name{\mathbf t}_n:n\in\bbN\rangle$
    \begin{itemize}
    \item[--]  $B_{n,k(n)} \in \psb(B_{\geq\nu_n})$,
    \item[--]  $\name{\mathbf t}_n$ is a $\bbQ$-name of the truth
    value,
    \item[--]  $q \Vdash$ ``if $\name{\mathbf t}_n = 1$,
      then $\max(B_{n,k(n)}) \subseteq \name A$,
    \item[--]  
       if $\name{\mathbf t}_n = 0$ then $\max(B_{n,k(n)})\cap \name A = \emptyset$".
    \end{itemize}
     Now by clause (\ref{k2c})  of our assumptions
    \begin{quote}
     there is an infinite $\cU \subseteq \bbN$, a truth value $\mathbf t$ and a condition $r$\\ such that $q \le_{\bbQ} r$ and $r \Vdash
    ``\name{\mathbf t}_n = \mathbf t$ for $n \in \cU$''. 
    \end{quote}
    Lastly, let $B_* = \bigcup\{B_{n,k(n)}:n \in \cU\} \cup \{\eta\}$ and
    clearly $B_*,r$ are as required.
\end{PROOF}

\begin{remark}\label{k2-d}
    In the assumption (\ref{k2b}) of Theorem~\ref{k2} it is enough that the BND player
    does not lose the game $\Game^{\seb}_{\bbQ}$, i.e., the NU player has no
    winning strategy. 
\end{remark}

\begin{theorem}\label{k2m}
    Assume that 
    \ENUMplusONE\begin{enumerate}
    \item[(a)] $\bbQ$ is an ${}^\omega\omega$-bounding proper forcing notion, 
    \item[(b)] forcing with $\bbQ$ preserves some $P$-point, and 
    \item[(c)]  $B \in \CTW({}^{\omega >} \omega,\triangleleft)$.
    \end{enumerate}
    Then $(\psb(B))^{\mathbf V}$ is big in $\mathbf V^{\bbQ}$; see Definition
    \ref{7g.18}(\ref{718.2}).
\end{theorem}

\begin{PROOF}{\ref{k2m}}
    Let $D$ be a $P$-point ultrafilter such that $\Vdash_{\bbQ}$`` $D$ generates an ultrafilter '' and $p\in\bbQ$. Suppose that $p\Vdash$``$\name{c}:\max(\name{B}) \longrightarrow\{0,1\}$ ''. Let $\chi$ be a large
    enough regular cardinal and $N\prec (\cH(\chi),{\in})$ be a countable model
    with $B,\bbQ,p, \name{c},\ldots\in N$. Let $q\in\bbQ$ be such that:
    
    \ENUMplusONE\begin{itemize}
    \item[--] $p\leq_{\bbQ} q$,
    
    \item[--] $q$ is $(N,\bbQ)$--generic,
    
    \item[--] for some $g\in \big({}^\omega\omega\big)^{\mathbf V}$ we have $q\Vdash$`` if $\name{f}\in {}^\omega\omega\cap N$, then $\name{f} <_{J^{\bd}_\omega} g$ '',
    
    \item[--] for some $A\in D$ we have $q\Vdash$`` if $\name{B}\in D\cap
    N$, then $A\subseteq^* \name{B}$ ''.
    \end{itemize}
    \relax From $(g,A)$ we can compute ${\mathbf c}$ and $B'\in\big(\psb(B)
    \big)^{\mathbf V}$ such that $q\Vdash$`` $\name{c}\rest B'$ is constantly ${\mathbf c}$ '', so we are done.
\end{PROOF}

\begin{theorem}\label{k12}
    Assume that ${\mathbf x}\in {\mathbf K}$ and
    \ENUMplusONE\begin{enumerate}[(A)]
    \item\label{k12A} The forcing notion  $\bbQ$ is a proper forcing notion,
    \item\label{k12B} the set  $D_*$ is a Ramsey ultrafilter in $\mathbf V$,
    \item\label{k12C}  $\Vdash_{\bbQ} ``\fil(D_*)$ is a Ramsey ultrafilter'',
    \item\label{k12D}  $B\in\cB_{\mathbf x}$.
    \end{enumerate}
    \Then\, $(\psb(B))^{\mathbf V}$ is large in $\mathbf V^{\bbQ}$ (see Definition~\ref{7g.18}).
\end{theorem}

\begin{PROOF}{\ref{k12}}
    We prove this by induction on $\Dp(B)$ for $B\in \cB_{\mathbf x}$. Let $\mathbf
    c: \max(B)\longrightarrow \bbN$ be from $\mathbf V^{\bbQ}$ and we should find $(B',Y)$ as promised.  We shall work in ${\mathbf V}^{\bbQ}$.
    
    If $\Dp(B)=0$, i.e., $|B|=1$ this is trivial.
    
    If $\Dp(B)=1$ let $\langle \eta_n:\eta \in \bbN\rangle \in \mathbf V$ list $\suc_B(\rt_{\mathbf x})$: by assumption (\ref{k12C}) in $\mathbf V^{\bbQ}$, for some $A\in\fil(D_*)$ the sequence $\langle \mathbf c(\eta_n):n \in A\rangle$ is
    constant or without repetitions. Without loss of generality $A \in D_*
    \subseteq \mathbf V$ and then $\{\rt_{\mathbf x}\} \cup \{\eta_n:n \in A\}$ is
    as required.
    
    So assume $\Dp(B) > 1$.  \Wilog \, $0 \notin \Rang(\mathbf c)$.  For $\nu \in B \backslash \max(B)$ let $\langle \eta_{\nu,n}:n \in
    \bbN\rangle$ list $\suc_B(\nu)$ so that the function $(\nu,n)\mapsto
    \eta_{\nu,n}$ belongs to $\mathbf V$.  In ${\mathbf V}^{\bbQ}$, by
    downward induction on $\nu \in B$, we choose $k_\nu = k(\nu)$, $A_\nu,
    A_{\nu,\rho}$ and ${\mathbf t}_{\nu,\rho}$ so that the following requirements
    (\ref{k12pa})--(\ref{k12pd}) are satisfied: 
    \begin{enumerate}[(a)]
    \item\label{k12pa} $k_\nu\in \bbN$, $A_\nu\in D_*$,
    \item\label{k12pb} if $\nu\in\max(B)$, then $k_n={\mathbf c}(\nu)$, so $>0$,  
    \item\label{k12pc} if $\nu \notin \max(B)$ then $(\alpha)_\nu$ or $(\beta)_\nu$
      where: 
    \ENUMplusONE\begin{enumerate}
    \item[$(\alpha)_\nu$] $k_\nu = 0$ and $\langle k(\eta_{\nu,n}):n \in
      A_\nu\rangle$ is with no repetitions, all non-zero,
    \item[$(\beta)_\nu$] $\langle k(\eta_{\nu,n}):n \in A_\nu\rangle$ is constantly $k_\nu$,
    \end{enumerate}
    \item\label{k12pd}  for $\nu,\rho \in B \backslash \max(B)$ we have $A_{\nu,\rho}
      \in D_*$ and ${\mathbf t}_{\nu,\rho}\in\{0,1\}$ and 
    
      either ${\mathbf t}_{\nu,\rho}=1$ and $n \in A_{\nu,\rho} \Rightarrow
      k(\eta_{\rho,n}) = k(\eta_{\nu,n})$
    
    or ${\mathbf t}_{\nu,\rho}=0$ and $\{k(\eta_{\rho,n}):n\in
    A_{\nu,\ell}\}$ is disjoint to $\{k(\eta_{\nu,n}):n\in A_{\nu,\rho}\}$.
    \end{enumerate}
    This is possible by assumption (\ref{k12C}). By the same assumption, there is $A_*
    \in D_*$ such that:
    \begin{itemize}
    \item[] if $\nu \in B \backslash \max(B)$ then  $A_*\subseteq^*
    A_\nu$,
    \item[] if $\nu,\rho\in B \backslash \max(B)$ then $A_*\subseteq^*
      A_{\nu,\rho}$.  
    \end{itemize}
    Let $\langle \nu_n:n \in \bbN\rangle$ list $B \backslash \max(B)$ and
    let $f_1$ be the function with domain $B \backslash \max(B)$ such that
    \[f_1(\nu) = \{\eta_{\nu,n}:n \in A_*\backslash A_\nu  \mbox{ or for some $k<\ell$ we have }\nu=\nu_\ell\ \wedge\ n\in A_*\setminus
    A_{\nu_k,\nu_\ell} \}\]
    (so  $f_1(\nu)\in [\suc_B(\nu)]^{< \aleph_0}$).
    
    As the forcing $\bbQ$ satisfies (\ref{k12C}), it is bounding, so there is a
    function $f_2 \in \mathbf V$ with domain $B \backslash \max(B)$ such that $f_1(\nu)\subseteq f_2(\nu) \in [\suc_B(\nu)]^{< \aleph_0}$. Clearly,
    letting 
    \begin{align*}
    B_1 := A_{B,f} := \big\{\,\nu \in B:   \ &   \text{{\bf if} $\rho \in B$
     satisfies $\rt_{\mathbf x} \le_B \rho <_B \nu$}\\
      &\text{and $n$ is such that $\eta_{\rho,n}
      \le_B \nu$,}\\
      & \text{{\bf then} $n \in A_*$ but $\eta_{\rho,n} \notin f_2(\nu)$}\,\big\}
    \end{align*}
    we have $B_1 \in \psb(B)^{\mathbf V}$. 
    
    Define
     \[ Y:= \{\,\nu \in B_1:k_\nu \ne 0 \text{ and } \rho <_B \nu
      \Rightarrow k_\rho = 0\,\}.\]
    Plainly, 
    \begin{quote}
     the set $Y$ is a front of $B_1$,  \\
     and if $\nu \in Y$ then $\mathbf c \rest(B_1)_{\ge \nu}$ is constantly $k_\nu$. 
    \end{quote}
    Note that 
    \begin{quote}
      if $\nu\in B_1$ and $k_\nu=0$, then either $k_\eta=0$ for
      all $\eta\in\suc_{B_1}(\nu)$, \\or $k_\eta>0$ for all $\eta\in
      \suc_{B_1}(\nu)$. 
    \end{quote}
    Hence:
    \begin{quote}
      if $\nu \in B_1 \backslash \max(B_1)$ and $\suc_{B_1}(\nu)$ is not
      disjoint to $Y$, \\then $\suc_{B_1}(\nu) \subseteq Y$.
    \end{quote}
    If $Y = \{\rt_{\mathbf x}\}$ we are done, so assume not. Let $Z = \{\eta \in
    B_1:\eta \notin \max(B_1)$ and  $\suc_{B_1}(\eta) \subseteq Y\}$.  So
    
    \begin{quote}
     both $Z$ and $Y$ are fronts of $B_1$,\\
     both $Z$ and $Y$ belong to $\mathbf V$,\\
     if $\nu\in Y$ then $\langle k_\rho:\rho\in \max \big((B_1)_{\geq
        \nu} \big)\rangle$ is constantly $k_\nu$.
    \end{quote}
    Also if $Z=\{\rt_{\mathbf x}\}$ we are done, so assume not. Let $\langle
    \nu_n:n \in \bbN\rangle$ list~$Z$.  As $\fil(D_*)$ is a Ramsey ultrafilter
    we can find $\bar n$ such that
    \begin{enumerate}
    \item[--] $\bar n =\langle n(i):i\in\bbN\rangle$ is an increasing
      enumeration of a member of $D_*$, hence $\bar{n}\in
      \mathbf V$,
    \item[--] if $\ell\leq i$ then $\eta_{\nu_\ell,n(i)}\in B_1$, 
    \item[--] if $\ell<i$, ${\mathbf t}_{\nu_\ell,\nu_i}=0$ and $\nu_\ell,\nu_i\in B_1[{\leq} Z]$, then $\{k(\eta_{\nu_i,n(j)}):
    i\leq j\}$ is disjoint from $\{k(\eta_{\nu_\ell,n(j)}):i\le j\}$,
    moreover it is disjoint from $\{k(\eta_{\nu_\ell,n(j)}:j\in \bbN\}$. 
    \end{enumerate}
    Lastly, as $\bar{n}\in \mathbf V$ we can find in $\mathbf V$ a partition $\langle C_\ell:\ell \in \bbN\rangle$ of $\bbN$ to (pairwise disjoint)
    infinite sets and let
    \[\begin{array}{ll}
    B_2 = \{\varrho\in B_1:&\mbox{if }\nu_\ell <_{B_1}\varrho\mbox{ and }
    \nu_\ell\in B_1[{\leq}Z],\\
    &\mbox{then for some }i\in C_\ell\mbox{ we have }
    i > \ell \mbox{ and }\eta_{\nu_\ell,n(i)} \le_{B_2} \varrho\}.
    \end{array}\]
    Easily $B_2 \in \mathbf V$, $B_2\in \psb(B_1)$ and it is as required.
\end{PROOF}

Motivated by Definition~\ref{k3} we introduce the following bounding games for a forcing notion $\bbQ$. 

\begin{definition}\label{k11}
    Let $\bbQ$ be a forcing notion and $p \in \bbQ$.  We will define 3 games: 
    \relax $\Game^{\bd}_p = \Game^{\bd}_{\bbQ,p}$, $ \Game^{\ufbd}_p = \Game^{\ufbd}_{\bbQ,p}$, and $ \Game^{\vfbd}_p = \Game^{\vfbd}_{\bbQ,p}$.  Each of the games 
    lasts $\omega$ rounds, and in each round player NU moves first, and player BND second. 
    
    The games $\Game^{\bd},  \Game^{\ufbd}, \Game^{\vfbd}$ are defined analogously, 
    but here the condition $p$ will be chosen by player NU in his first move.
    
    \ENUMplusONE\begin{enumerate}
    \item In the $n$-th round of  the game  $\Game^{\bd}_p$, 
    first the NU player gives a $\bbQ$-name $\name \tau_n$ of a member of $\mathbf V$ and then
    the BND player gives a finite set $w_n \subseteq
    \mathbf V$.   \\  After $\omega$ rounds, 
     the BND player wins the play
    iff  there is $q \in \bbQ$ above $p$ forcing ``$\name\tau_n \in
    w_n"$ for every~$n$.
    
    \item 
    In the $n$-th round of  the game $\Game^{\ufbd} _p$,  
    first the NU player chooses an ultrafilter $E_n$ on
    some set $I_n$ from $\mathbf V$ and a $\bbQ$-name $\name E^+_n$ of an
    ultrafilter on $I_n$ extending $E_n$ and a $\bbQ$-name $\name X_n$ of
    a member of $\name E^+_n$;  then 
    the BND player chooses $t_n \in I_n$.
    \\
     In the end of the play the BND player wins the play
    iff  there is $q \in \bbQ$ above $p$ forcing
    ``$t_n \in \name X_n$'' for every~$n$.
    \item The game $\Game^{\vfbd}_p $ is similar to $\Game^{\ufbd}_p$,  but now we demand
    \[\Vdash_{\bbQ}\mbox{`` }\name X_n \in E_n\mbox{ or just includes a
    member of }E_n\mbox{ '',}\]
    so $\name E^+_n$ is redundant.
    \end{enumerate}
\end{definition}

Basic relations between the games introduced above are given by the following result. 

\begin{proposition}\label{k14}
    Let $\bbQ$ be a forcing notion.
    \ENUMplusONE\begin{enumerate}
    \item\label{k14.1} If BND wins in $\Game^{\seb}_{\bbQ,p}$ \then \, BND wins in $\Game^{\bd}_{\bbQ,p}$ which implies that $\bbQ$ is a bounding forcing.
    \item\label{k14.2} The player BND wins in $\Game^{\bd}_{\bbQ,p}$ \Iff \, BND wins
    in $\Game^{\vfbd}_{\bbQ,p}$.
    \item\label{k14.3} If the player BND wins in $\Game^{\ufbd}_{\bbQ,p}$ \then \, BND
       wins in $\Game^{\vfbd}_{\bbQ,p}$.
    \item\label{k14.4} We can replace in (\ref{k14.1})--(\ref{k14.3}) above ``wins'' by ``does not lose''.
    \end{enumerate}
\end{proposition}

\begin{PROOF}{\ref{k14}}
    (\ref{k14.1})\quad The second implication is obvious, so we concentrate on the
    first. For every $\name\tau$, a $\bbQ$-name of an ordinal we define
    a pair $(T_{\name\tau},\name F_{\name\tau})$ as follows:
    
    \begin{itemize}
    \item[--] let $u = \{\alpha:\ \nVdash_{\bbQ} ``\name\tau \ne
      \alpha"\}$, it is a non-empty set of $\leq|\bbQ|$ ordinals,
    \item[--] $T_{\name\tau}$ is the tree $\{\eta:\eta
    \in {}^{\omega >}u\}$, i.e., ordered by $\triangleleft$ (being an
    initial segment),
    \item[--] $\name F_{\name\tau}(\eta)= \eta \char 94 \langle \name
      \tau \rangle$ for $\eta \in T_{\name\tau}$.
    \end{itemize}
    Clearly,
    \begin{itemize}
    \item[--] $T_{\name\tau}$ is in $\mathbf V$, a tree with $\omega$ levels,
    \item[--] $\name F_{\name\tau}$ is a $\bbQ$-name of a
      function with domain $T_{{\name \tau}}$ such that $\Vdash_{\bbQ} ``\name
      F_{\name\tau}(\eta)\in\suc_{T_{\name\tau}}(\eta)"$.
    \item[--]  if $q \in \bbQ$ and $\eta \in T_{\name\tau}$ (so $\Rang(\eta)$ is a finite subset of $u$) then the following are
      equivalent:
    \ENUMplusONE\begin{enumerate}
    \item[(i)]  $q \Vdash ``\name\tau \in \Rang(\eta)$'',
    \item[(ii)]  $q \Vdash$ ``for some $\nu \triangleleft \eta$ we have $\nu
      \char 94 \langle \name F_{\name\tau}(\nu)\rangle\trianglelefteq \eta$''.  
    \end{enumerate}
    \end{itemize}
    So playing the game $\Game^{\bd}_{\bbQ,p}$ we can ``translate'' it to a play
    of $\Game^{\seb}_{\bbQ,p}$ replacing the NU choice of $\name\tau_n$ by the
    choice of $(T_{\name\tau},\name F_{\name\tau})$.  Thus every strategy {\bf
      st}$_1$ of BND in $\Game^{\seb}_{\bbQ,p}$ translates it to a strategy {\bf
      st}$_2$ of the player BND in $\Game^{\bd}_{\bbQ,p}$.

    \noindent (\ref{k14.2})\quad We now need two translations.

    \noindent \BOLD{Translating $\Game^{\vfbd}_{\bbQ,p}$ to $\Game^{\bd}_{\bbQ,p}$}:
    
    So we are given a move $y = (I,E,\name X)$ of NU in a play of $\Game^{\vfbd}_{\bbQ,p}$ as in Definition~\ref{k11}, i.e.,
    
    \begin{itemize}
    \item[--] $I \in \mathbf V$, $E$ is an ultrafilter on $I$, in $\mathbf V$, and
    \item[--] $\Vdash_{\bbQ} ``\name X \in E$ or just includes a member $\name X'$ of $E$''. 
    \end{itemize}
    Now we have: \\
      if $q \Vdash ``\name X' \in \cW"$ where $\cW \subseteq E$ is
      finite ($\cW$ an object in $\mathbf V$ not a name), \\ 
       then $\bigcap\{A:A \in \cW\}$ is non-empty and $t \in \bigcap\{A:A\in\cW\}
      \Rightarrow q \Vdash ``t \in \name X' \subseteq \name X"$.
    
    \noindent\BOLD{Translating $\Game^{\bd}_{\bbQ,p}$ to $\Game^{\vfbd}_{\bbQ,p}$}:
    
    Given $y = (I,\name\tau),\name\tau$ a $\bbQ$-name of a member $I$ of $\mathbf
    V$ we define $I_y = [I]^{< \aleph_0} \in \mathbf V$ and choose $E_y \in \mathbf
    V$ an ultrafilter on $I_y$ such that $u_* \in [I]^{<\aleph_0} \Rightarrow
    \{u \in [I]^{< \aleph_0}:u_* \subseteq u\} \in E$; lastly we choose
    \[\name X_y = \{u \in [I]^{< \aleph_0}:\name\tau \in u\}.\]
    So $(I_y,E_y,\name X_y)$ is a legal move in $\Game^{\vfbd}_{\bbQ,p}$ and for
    a finite subset $t$ of $I$: 
    \begin{quote}
     if $q \Vdash ``t \in \name{X}_y"$ then $q \Vdash
      ``\name\tau \in t$''. 
    \end{quote}
    
    \noindent (\ref{k14.3})\quad Obvious.
    
    \noindent (\ref{k14.4})\quad The same proof.
\end{PROOF}

\begin{claim}\label{k17}
    (1) [CH] Let $\mathbb{Q}$ be a bounding Suslin-proper forcing preserving some $P$-point (or less as in \ref{k2}(c)) (see Judah-Shelah \cite{Sh:292} e.g. Sacks forcing or see Roslanowski-Shelah, \cite{Sh:470}). Then there is $\bfx$ as in \S B (so ultra) such that in addition:
    
    \begin{enumerate}
        \item[$(\ast)_{1}$] $D_{\bfx}$ generates an ultra filter in $\mathbf{V}^{\mathbb{Q}}.$  
    \end{enumerate}
    
    (2) If $\mathbb{Q}_{i}$ (for $i < i_{\ast} \leq \omega_{1}$) is a bounding Suslin-proper forcing notion, \underline{then} we can find $\mathbf{x}$ such that: 
    
    \begin{enumerate}
        \item[$(\ast)$] $D_{\mathbf{x}}$ generates an ultra filter in $\mathbf{V}^{\mathbb{Q}_{i}}$ for each i.
    \end{enumerate}
    
    (3) Let $\mathbb{Q}_{i}^{r}$ be a bounding Suslin-proper forcing with any real parameters $r$ each such forcing, preserving some $P$-point (for $i < i_{\ast} \leq \omega_{1}$). Let $\mathbb{P}$ be the limit of a CS-iteration of cases of $\mathbb{Q}_{i}^{r}.$ Then we can find $\mathbf{x}$ as above for $\mathbb{P}.$  
\end{claim}

\begin{PROOF}{\ref{k17}}
    (1) Choosing as before $\mathbf{x}_{\alpha} \in \mathbf{K}_{\leq  {x_{0}}}$ by induction on $\alpha < \omega_{1},$ in stage $\alpha.$ Let $\mathbf{A}_{\alpha}$ be the set of objects $\mathbf{a}$ consisting of (so $p = p_{\mathbf{a}}$, etc):
    
    \begin{enumerate}
        \item[$(\ast)_{\mathbf{a}}^{1}$] 
        
        \begin{enumerate}
            \item[(a)] $p \in \mathbb{Q},$
            
            \item[(b)] $B \in \mathbf{A}_{Y}$ where $Y$ is a front of $B,$
            
            \item[(c)] $p_{\eta} \in \langle p_{\eta, \ell}: \ell < \omega \rangle$ a maximal antichain of $\mathbb{Q}.$ 
        \end{enumerate}
    \end{enumerate}
    
    Clearly $\Vert \mathbf{A} \Vert \leq \aleph_{1}.$ For $\eta \in Y, \iota_{\eta, \ell}  < 2,$ we just have to guarantee: 
    
    \begin{enumerate}
        \item[$(\ast)_{2}$] for each $\alpha < \omega_{1}, \mathbf{a} \in \mathbf{A}_{\alpha}$ for some $ {\beta} \in  {[\beta_Y, 
        \omega_{1}]}$ there  is $\iota <  {2}, q \in \mathbb{Q}$ above $ {\alpha}$ and $B' \in \mathbf{A}_{ {\beta}}$ such that $B' \leq B$ and $q \Vdash$``$(\forall \eta \in Y)($ if $p_{\eta, \ell} \in \mathbf{G}$ then $\iota_{\eta, \ell} = \iota)$''.   
    \end{enumerate}
    
    Why this suffice is clear. 
    
    Why this is possible to carry as in earlier proof (using ``$\mathbb{Q}$ preserve some $P$-point'' (or less)). 
\end{PROOF}

\noindent{\bf Acknowledgement:}\quad 

We thank Alan Dow for asking me about~\ref{boxplus2b} and~\ref{boxplus2c} and for some comments and Andrzej Ros{\l}anowski for much help.



\providecommand{\WileyBibTextsc}{}
\let\textsc\WileyBibTextsc
\providecommand{\othercit}{}
\providecommand{\jr}[1]{#1}
\providecommand{\etal}{~et~al.}

\bibliographystyle{amsalpha}
\bibliography{shlhetal}

\end{document}